\newcommand{\nc}{\newcommand}
\numberwithin{equation}{section}
\newtheorem{thm}{Theorem}[section]
\newtheorem*{thm*}{Theorem}
\newtheorem{prop}[thm]{Proposition}
\newtheorem{lem}[thm]{Lemma}
\newtheorem{cor}[thm]{Corollary}
\newtheorem*{cor*}{Corollary}
\theoremstyle{remark}
\newtheorem{rem}[thm]{Remark}
\newtheorem{dfn}[thm]{Definition}
\newcount\cols {\catcode`,=\active\catcode`|=\active
 \gdef\Young(#1){\hbox{$\vcenter
 {\mathcode`,="8000\mathcode`|="8000
  \def,{\global\advance\cols by 1 &}%
  \def|{\cr
        \multispan{\the\cols}\hrulefill\cr
        &\global\cols=2 }%
  \offinterlineskip\everycr{}\tabskip=0pt
  \dimen0=\ht\strutbox \advance\dimen0 by \dp\strutbox
  \halign
   {\vrule height \ht\strutbox depth \dp\strutbox##
    &&\hbox to \dimen0{\hss$##$\hss}\vrule\cr
    \noalign{\hrule}&\global\cols=2 #1\crcr
    \multispan{\the\cols}\hrulefill\cr%
   }
 }$}}
 \gdef\Skew(#1:#2){\hbox{$\vcenter
 {\mathcode`,="8000\mathcode`|="8000
  \dimen0=\ht\strutbox \advance\dimen0 by \dp\strutbox
  \def\boxbeg{\vbox
    \bgroup\hrule\kern-0.4pt\hbox to\dimen0\bgroup\strut\vrule\hss$}%
  \def\boxend{$\hss\egroup\hrule\egroup}%
  \def,{\boxend\boxbeg}%
  \def|##1:{\boxend\vrule\egroup\nointerlineskip\kern-0.4pt
    \moveright##1\dimen0\hbox\bgroup\boxbeg}%
  \def\\##1\\##2:{\boxend\vrule\egroup\nointerlineskip\kern-0.4pt
    \kern ##1\dimen0\moveright##2\dimen0\hbox\bgroup\boxbeg}%
  \moveright#1\dimen0\hbox\bgroup\boxbeg#2\boxend\vrule\egroup
 }$}} }
\def\smallsquares
\nc{\gl}{\mathfrak{gl}}
\nc{\GL}{\mathfrak{GL}}
\nc{\g}{\mathfrak{g}}
\nc{\gh}{\widehat\g}
\nc{\h}{\mathfrak{h}}
\nc{\n}{\mathfrak{n}}
\nc{\la}{\lambda}
\nc{\C}{\mathbb C }
\nc{\D}{\mathbb D }
\nc{\Z}{\mathbb Z }
\nc{\N}{\mathbb N }
\nc{\R}{\mathbb R }
\nc{\Q}{\mathbb Q }
\nc{\al}{\alpha }
\nc{\bs}{{\bf s}}
\nc{\bbs}{\ol{\bf s}}
\nc{\bt}{{\bf t}}
\nc{\br}{{\bf r}}
\nc{\bp}{{\bf p}}
\nc{\bP}{{\bf P}}
\nc{\bm}{{\bf m}}
\nc{\ba}{\ol{\alpha} }
\nc{\bj}{{\bf j}}
\nc{\bk}{{\bf k}}
\nc{\bfa}{{\bf a}}
\nc{\bfb}{{\bf b}}
\nc{\om}{\omega}
\nc{\ord}{\succ}
\nc{\ta}{\theta}
\nc{\ve}{\varepsilon}
\nc{\ch}{{\mathop {\rm ch}}}
\nc{\Tr}{{\mathop {\rm Tr}\,}}
\nc{\Id}{{\mathop {\rm Id}}}
\nc{\ad}{{\mathop {\rm ad}}}
\nc{\bra}{\langle}
\nc{\ket}{\rangle}
\nc{\bx}{{\bf x}}
\nc{\pa}{\partial}
\nc{\ld}{\ldots}
\nc{\cd}{\cdots}
\nc{\hk}{\hookrightarrow}
\nc{\T}{\otimes}
\newcommand{\bea}{\begin{equation}}
\newcommand{\ena}{\end{equation}}
\newcommand{\be}{\begin{equation*}}
\newcommand{\en}{\end{equation*}}
\nc{\gr}{\mathrm{gr}}
\nc{\ov}{\overline}
\nc{\msl}{\mathfrak{sl}}
\nc{\msp}{\mathfrak{sp}}
\nc{\mgl}{\mathfrak{gl}}
\nc{\U}{\hbox{\rm U}}
\nc{\V}{\EuScript V}
\newcommand{\ol}{\overline}
\newcommand{\bc}{{\mathbb C}}
\newcommand{\fn}{{\mathfrak n}}
\newcommand{\lam}{\lambda}
\def\gr{\operatorname{gr}}
\def\wt{\operatorname{wt}}
\begin{document}

\title[PBW filtration and bases]
{PBW filtration  and bases for  symplectic Lie algebras}

\author{Evgeny Feigin, Ghislain Fourier and Peter Littelmann}
\address{Evgeny Feigin:\newline
Department of Mathematics, University Higher School of Economics,
20 Myasnitskaya st, 101000, Moscow, Russia
{\it and }\newline
Tamm Department of Theoretical Physics,
Lebedev Physics Institute,\newline
Leninsky prospect, 53,
119991, Moscow, Russia}
\email{evgfeig@gmail.com}
\address{Ghislain Fourier:\newline
Mathematisches Institut, Universit\"at zu K\"oln,\newline
Weyertal 86-90, D-50931 K\"oln,Germany
}
\email{gfourier@math.uni-koeln.de}
\address{Peter Littelmann:\newline
Mathematisches Institut, Universit\"at zu K\"oln,\newline
Weyertal 86-90, D-50931 K\"oln,Germany
}
\email{littelma@math.uni-koeln.de}

\begin{abstract}
We study the PBW filtration on the highest weight representations $V(\la)$ of
$\msp_{2n}$. This filtration is induced by the standard degree filtration on
$U(\n^-)$. We give a description
of the associated graded $S(\n^-)$-module $gr V(\la)$ in terms of generators and
relations. We also construct a basis of $gr V(\la)$. As an application we
derive a graded combinatorial formula for the character of $V(\la)$ and obtain
a new class of bases of the modules $V(\la)$.
\end{abstract}
\maketitle

\section*{Introduction}
In this paper we continue the study of the PBW filtration on irreducible representations
of simple Lie algebras initiated in \cite{FFoL}. The goal of this paper is to develop
the theory of PBW-graded modules for symplectic Lie algebras $\msp_{2n}$. We start with recalling
the definition of the PBW filtration.

Let $\g$ be a simple Lie algebra and let $\g=\n^+\oplus\h\oplus \n^-$ be a Cartan decomposition.
For a dominant integral $\la$ we denote by $V(\la)$ the irreducible $\g$-module with
highest weight $\la$. Fix a highest weight vector $v_\la\in V(\la)$.
Then $V(\la)=\U(\n^-)v_\la$, where $\U(\n^-)$ denotes the universal
enveloping algebra of $\n^-$. The degree filtration $\U(\n^-)_s$ on $\U(\n^-)$ is defined by:
$$
\U(\n^-)_s=\mathrm{span}\{x_1\dots x_l:\ x_i\in\n^-, l\le s\}.
$$
In particular, $\U(\n^-)_0=\C$ and $gr \U(\n^-)\simeq S(\n^-)$, where $S(\n^-)$
denotes the symmetric algebra over $\n^-$. The filtration of $\U(\n^-)$
by the subspaces $\U(\n^-)_s$ induces a filtration of $V(\la)$ by the subspaces
$V(\la)_s$:
$$
V(\la)_s=\U(\n^-)_sv_\la.$$
We call this filtration the
PBW filtration.
The central objects of our paper are the associated graded spaces
$gr V(\la)$ as $S(\n^-)$-modules for $\g$ of type ${\tt C}_n$.

We note that
$gr V(\la)=S(\n^-)v_\la$ is a cyclic $S(\n^-)$-module. So one has
$$
gr V(\la)\simeq S(\n^-)/I(\la),
$$
for some ideal $I(\la)\subset S(\n^-)$. For example, for any simple root $\al_i$
the power $f_{\al_i}^{(\la,\al_i)+1}$ of a root vector $f_{\al_i}\in\n^-_{-\alpha_i}$
belongs to $I(\la)$
since $f_{\al_i}^{(\la,\al_i)+1}v_\la=0$ in $V(\la)$. To
describe $I(\la)$ explicitly,
we prepare some notations.
All positive roots of $\msp_{2n}$
can be divided into two groups:
\begin{gather*}
\al_{i,j}=\al_i+\al_{i+1}+\dots +\al_j,\ 1\le i\le j\le n,\\
\alpha_{i, \ol{j}} = \alpha_i + \alpha_{i+1} + \ldots +
\alpha_n + \alpha_{n-1} + \ldots + \alpha_j, \ 1\le i\le j\le n.
\end{gather*}
In particular, $\al_{1,\ol{1}}$ is the highest root.
Consider the action of the opposite subalgebra
$\n^+$ on $V(\la)$. It is easy to see that $\n^+ V(\la)_s\hk V(\la)_s$, so we
obtain the structure of an $U(\n^+)$-module on $gr V(\la)$ as well.
We show:
\vskip 5pt\noindent 
{\bf Theorem~A.}\ {\it The ideal $I(\la)$ is generated as $S(\n^-)$-module by the subspace }
$$\U(\n^+)\circ \mathrm{span}
\{f_{\al_{i,j}}^{(\la,\al_{i,j})+1}, 1\le i\le j\le n-1,\
f_{\al_{i,\ol{i}}}^{(\la,\al_{i,n})+1}, 1\le i\le n\}.
$$
\par\vskip 5pt
\noindent
Theorem $A$ should be understood as a commutative analogue
of the well-known description of $V(\lam)$ as the quotient
$$
V(\la)\simeq \U(\n^-)/\langle  f_{\al_i}^{(\la,\al_i)+1}, 1\le i\le n \rangle
$$
(see for example \cite{H}).

Our second problem (closely related to the first one)
is to construct a monomial basis of
$gr V(\la)$. The elements $\prod_{\al>0} f_\al^{s_\al} v_\la$ with $s_\al\ge 0$
obviously span $gr V(\la)$ (recall that the order in  $\prod_{\al>0} f_\al^{s_\al}$
is not important since $f_\al$ are considered as elements of $S(\n^-)$).
For each $\la$ we construct a set $S(\la)$ of multi-exponents
$\bs=\{s_\al\}_{\al>0}$ such that the elements
$$
f^\bs v_\la=\prod_{\al>0} f_\al^{s_\al}v_\la, \ \bs\in S(\la)
$$
form a basis of $gr V(\la)$.
To give a definition of $S(\la)$ we need
the notion of a symplectic version of {\it Dyck path}, which is precisely defined in
Section \ref{secdef}, Definition~\ref{dyckpath}.
The definition is similar to the one for usual Dyck paths,
see for example
\cite{FFoL}). In short, a Dyck path $\bp=(p(0),\dots,p(k))$ is a sequence of positive roots
starting at a simple root $\al_{i}$, ending at a root $\al_{j}$ or
$\al_{j,\ol j}$, $j\ge i$ and
obeying some recursion rules. We denote by $\D$ the set of all Dyck paths.

For a dominant weight $\la$ we introduce a polytope
$P(\lam)\subset \R^{n^2}_{\ge 0}$:
\[
P(\lam):=\bigg\{(s_ \al)_{\al> 0}\mid \forall\bp\in\D:
\begin{array}{l}
\text{ If }p(0)=\al_i,p(k)=\al_j, \text{ then }\\
s_{p(0)} + \dots + s_{p(k)}\le (\la,\al_{i,j}),\\
\text{ if } p(0)=\al_i,p(k)=\al_{j,\ol{j}}, \text{ then }\\
s_{p(0)} + \dots + s_{p(k)}\le (\la,\al_{i,n})\\
\end{array}\bigg\}.
\]
Let $S(\la)$ be the set of integral points in $P(\lam)$.

We show:
\vskip 5pt\noindent
{\bf Theorem~B}.\
{\it The set of elements $f^\bs v_\la$, $\bs\in S(\la)$, forms a basis of $gr V(\la)$.}
\vskip 5pt
For $\bs \in S(\lambda)$ define the weight
$$
\wt(\bs) := \sum_{1 \leq j \leq k \leq n} s_{\al_{j,k}} \alpha_{j,k}
+\sum_{1 \leq j \leq k < n} s_{\al_{j,\ol k}} \alpha_{j,\ol k}.
$$
As an important application we obtain:
\begin{cor}\label{application-cor}
\begin{itemize}
\item[]
\item[i)]
For each $\bs\in S(\la)$ fix an arbitrary order of factors $f_\al$ in the product
$\prod_{\al >0} f_\al^{s_\al}$.
Let $f^\bs=\prod_{\al >0} f_\al^{s_\al}$ be the ordered product.
Then the elements $f^\bs v_\la$, $\bs\in S(\la)$, form a basis of $V(\la)$.
\item[ii)] $\dim V(\lam)=\sharp S(\la)$.
\item[iii)] $char V(\lam) =\sum_{\bs\in S(\lam)} e^{\la-\wt(\bs)}$.
\end{itemize}
\end{cor}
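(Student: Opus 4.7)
The plan is to deduce all three parts directly from Theorem~B by the standard principle that a basis of the associated graded of a filtered vector space lifts to a basis of the filtered space itself.

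For part (i), I would first observe that for any fixed ordering of the positive roots, the ordered product $f^\bs = \prod_{\al>0} f_\al^{s_\al} \in \U(\n^-)$ lies in $\U(\n^-)_s$ where $s = \sum_\al s_\al$, and its image in $\gr_s \U(\n^-) = S^s(\n^-)$ is the (order-independent) symmetric product $\prod f_\al^{s_\al} \in S(\n^-)$. Applying this to $v_\la$, the vector $f^\bs v_\la \in V(\la)_s$ has image in $V(\la)_s/V(\la)_{s-1} \subset \gr V(\la)$ equal to the class $f^\bs v_\la$ used in Theorem~B. Since Theorem~B asserts that $\{f^\bs v_\la : \bs \in S(\la)\}$ is a basis of $\gr V(\la)$, its restriction to multi-exponents with $|\bs|=s$ is a basis of $V(\la)_s/V(\la)_{s-1}$. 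A standard inductive lifting argument, filtration degree by filtration degree, then shows that the elements $f^\bs v_\la \in V(\la)$, taken with any chosen orderings, form a basis of $V(\la)$.

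Part (ii) is then an immediate consequence of (i), since a basis of $V(\la)$ has been put in bijection with $S(\la)$.

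For part (iii), I would note that $f_\al \in \n^-_{-\al}$ is a weight vector of weight $-\al$, so the ordered product $f^\bs v_\la$ has $\h$-weight
\[
\la - \sum_{\al>0} s_\al \al \;=\; \la - \wt(\bs),
\]
by the definition of $\wt(\bs)$ given just before the corollary. Since part (i) provides a weight basis of $V(\la)$ indexed by $S(\la)$, summing $e^{\la - \wt(\bs)}$ over $\bs \in S(\la)$ yields the character formula.

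There is no real obstacle here; the only subtlety is making the lifting argument in (i) precise, i.e., verifying that the passage from the symmetric-algebra basis of $\gr V(\la)$ to an enveloping-algebra basis of $V(\la)$ is insensitive to the chosen ordering of the factors $f_\al$, which is exactly the content of the identification of $\gr_s \U(\n^-)$ with $S^s(\n^-)$ via the PBW theorem.
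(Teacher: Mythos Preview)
Your proposal is correct and matches the paper's approach: the paper simply states that Theorem~B ``immediately implies'' this corollary without giving further detail, and your write-up is precisely the standard filtration-to-graded lifting argument that justifies this implication.
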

We note that the order in the corollary above is important since we are back to the
action of the (in general) not commutative enveloping algebra.
We thus obtain a family of bases for irreducible $\msp_{2n}$-modules.
The existence of these bases (with the same indexing set)
was proved by Vinberg for ${\mathfrak{sp}}_4$ (see \cite{V}).

The modules $gr V(\la)$ have one more nice property. Namely, given two dominant
integral weights $\la$ and $\mu$, consider the subspace
$gr V(\la,\mu)\hk gr V(\la)\T gr V(\mu)$ generated from the product of highest weight
vectors: $gr V(\la,\mu)=S(\fn^-)(v_\la\T v_\mu)$. We prove that
$gr V(\la,\mu)\simeq gr V(\la+\mu)$ as $\fn^-$-modules. This is an analogue
of the corresponding classical result. In type $A$ this statement was proved
in \cite{FFoL}. Dualizing the embedding $gr V(\la+\mu)\hk gr V(\la)\T gr V(\mu)$, one
obtains an algebra structure on the space $\bigoplus_{\lam} (gr V(\lam))^*$.
The projective spectrum of this algebra is a certain deformation of the symplectic
flag variety. In type $A$ it was studied in \cite{F3}.

\begin{rem}
The data labeling the basis vectors is similar to that for the symplectic Gelfand-Tsetlin
patterns (see \cite{GT}, \cite{BZ}). However, these bases are very different
from the symplectic GT basis.
On the combinatorial side the connection with the Gelfand-Tsetlin patterns was
recently clarified by Ardila, Bliem and Salazar \cite{ABS}.
Generalizing a result of Stanley, they show that for every partition $\lambda$
there exists a marked poset $(P, A, \la)$ such that the Gelfand-Tsetlin polytope
coincides with the corresponding marked order polytope and our polytope
$P(\la)$ coincides with
the corresponding marked chain polytope.
Note that both polytopes have the same Ehrhart polynomials \cite{ABS}.
\end{rem}

We finish the introduction with several remarks. The PBW filtration for
highest weight representations was considered in \cite{FFoL}, \cite{Kum},
\cite{FFJMT}, \cite{F1}, \cite{F2}, \cite{F3}.
It was shown that it has important applications in algebraic geometry,
representation theory of current and affine algebras and in mathematical physics.

There exist special representations $V(\la)$ such that the operators
$f^\bs$ consist only of mutually commuting root vectors, even before passing to
$gr V(\la)$. These modules can be described via the theory of abelian radicals
and turned out to be important in the theory of vertex operator algebras
(see \cite{GG}, \cite{FFL}, \cite{FL}).

Finally we note that $gr V(\la)$ carries an additional grading on each weight space $V(\la)^\mu$
of $V(\la)$:
$$
gr V(\la)^\mu=\bigoplus_{s\ge 0} gr_s V(\la)^\mu=\bigoplus_{s\ge 0} V(\la)^\mu_s/V(\la)^\mu_{s-1}.
$$
The graded character of the weight space is the polynomial
$$
p_{\lam,\mu}(q):=\sum_{s\ge 0}(\dim V(\la)^\mu_s/V(\la)^\mu_{s-1})q^s.
$$
Define the degree
$$\deg(\bs) :=
\sum_{1 \leq j \leq k \leq n} s_{\al_{j,k}}
+\sum_{1 \leq j \leq k < n} s_{\al_{j,\ol k}}
$$
for $\bs\in S(\lam)$, and let
$S(\lam)^\mu$ be the subset of elements such that $\mu=\lam-\wt(\bs)$.
Then
\begin{cor*}
$p_{\lam,\mu}(q)=\sum_{\bs\in S(\lam)^\mu} q^{\deg \bs}$.
\end{cor*}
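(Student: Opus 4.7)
The plan is to derive this corollary as an immediate bookkeeping consequence of Theorem~B. The key observation will be that the basis of $gr V(\la)$ produced in Theorem~B is bi-homogeneous with respect to the two gradings at play: the PBW degree and the $\h$-weight.

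First I would verify that the PBW filtration on $V(\la)$ is compatible with the $\h$-action, so that each subspace $V(\la)_s$ decomposes as a direct sum of weight pieces $V(\la)^\mu_s := V(\la)_s \cap V(\la)^\mu$. This is immediate because $\n^-$ is $\h$-stable and $v_\la$ is a weight vector: any monomial $x_1\cdots x_l v_\la$ with $x_i \in \n^-_{-\beta_i}$ has weight $\la - \sum_i \beta_i$ and PBW-degree at most $l$. Consequently $gr V(\la)^\mu = \bigoplus_{s\ge 0} V(\la)^\mu_s/V(\la)^\mu_{s-1}$ and $p_{\lam,\mu}(q) = \sum_{s\ge 0}\dim(gr_s V(\la)^\mu)\, q^s$.

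Next I would apply Theorem~B. Each basis vector $f^\bs v_\la$ is a product of exactly $\sum_\al s_\al = \deg \bs$ root vectors from $\n^-$, hence lies in the PBW-degree-$\deg\bs$ piece of $gr V(\la)$; and since $f_\al \in \n^-_{-\al}$, its $\h$-weight is $\la - \wt(\bs)$ by the very definition of $\wt(\bs)$. Thus the basis of Theorem~B refines naturally along both gradings, and the subset $\{f^\bs v_\la : \bs \in S(\la)^\mu,\ \deg\bs = s\}$ is a basis of $gr_s V(\la)^\mu$.

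Summing the resulting dimension equality over $s\ge 0$ yields $p_{\lam,\mu}(q) = \sum_{\bs \in S(\la)^\mu} q^{\deg \bs}$. Since Theorem~B is already granted, no real obstacle remains: the proof reduces to the compatibility check between the PBW grading and the weight-space decomposition, which is built in from the construction.
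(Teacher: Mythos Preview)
Your argument is correct and matches the paper's intent: the corollary is stated in the introduction as an immediate consequence of Theorem~B, with no separate proof given, precisely because the basis $\{f^\bs v_\la : \bs\in S(\la)\}$ is bi-homogeneous with respect to the PBW degree and the $\h$-weight. Your explicit verification of the compatibility of the PBW filtration with the weight decomposition and the identification of the degree and weight of each basis vector is exactly the routine bookkeeping the paper leaves to the reader.
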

We note that our filtration is different from the Brylinski-Kostant filtration
(see \cite{Br}, \cite{Kos}).

Our paper is organized as follows:\\
In Section 1 we introduce notations and state the problems.
Sections 2 and 3 are devoted to the proof of Theorems $A$ and $B$.
In Section 2 we prove the spanning property of our basis
and in Section 3 we finalize the proof.

\section{Definitions}\label{secdef}
Let $R^+$ be the set of positive roots of $\msp_{2n}$.
For each $\al\in R^+$ we fix a non-zero element $f_\al\in\fn^-_{-\al}$.
Let
$\al_i$, $\omega_i$ $i=1,\dots,n$ be the simple roots and the fundamental weights.
All positive roots of $\msp_{2n}$
can be divided into two groups:
\begin{gather*}
\al_{i,j}=\al_i+\al_{i+1}+\dots +\al_j,\ 1\le i\le j\le n,\\
\alpha_{i, \ol{j}} = \alpha_i + \alpha_{i+1} + \ldots +
\alpha_n + \alpha_{n-1} + \ldots + \alpha_j, \ 1\le i\le j\le n
\end{gather*}
(note that $\al_{i,n}=\al_{i,\ol n}$).
We will use the following short versions
$$\al_i = \al_i,\ \al_{\ol{i}} = \al_{i,\ol{i}},\ f_{i,j}=f_{\al_{i,j}},\
f_{i,\ol j}=f_{\al_{i,\ol j}}.$$
We recall the usual order on the alphabet $J = \{1, \ldots, n, \ol{n-1}, \ldots, \ol{1}\}$
$$ 1 <2 < \ldots < n-1 < n < \ol{n-1} < \ldots < \ol{1}.$$

Let $\msp_{2n}=\n^+\oplus\h\oplus \n^-$ be the Cartan decomposition.
Consider the increasing degree  filtration on
the universal enveloping algebra of $\U(\n^-)$:
\begin{equation}\label{df}
\U(\n^-)_s=\mathrm{span}\{x_1\dots x_l:\ x_i\in\n^-, l \le s \},
\end{equation}
for example, $\U(\n^-)_0=\C \cdot 1$.

For a dominant integral weight  $\la=m_1\omega_1 + \dots + m_n\omega_n$ let
$V(\la)$ be the corresponding irreducible highest weight
$\msp_{2n}$-module with a highest weight vector $v_\la$.
Since $V(\la)=\U(\n^-)v_\la$, the filtration \eqref{df} induces an increasing  filtration $V(\la)_s$ on $V(\la)$:
\[
V(\la)_s=\U(\n^-)_s v_\la.
\]
We call this filtration the PBW filtration and study the associated graded
space $gr V(\la)$.  In the following lemma we describe some operators acting on
$gr V(\la)$. Let $S(\n^-)$ denotes the symmetric algebra of $\n^-$.
\begin{lem}
The action of  $\U(\n^-)$ on $V(\la)$ induces the structure of a $S(\n^-)$-module on
$gr V(\la)$ and
\[
gr(V(\la))=S(\n^-)v_\la.
\]
The action of $\U(\n^+)$ on $V(\la)$ induces the structure of a $U(\n^+)$-module on
$gr V(\la)$.
\end{lem}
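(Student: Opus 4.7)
The plan is to handle the two assertions in turn, both relying on standard compatibility arguments between the PBW filtration and the actions involved.

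For the $S(\n^-)$-structure, I would first check that the degree filtration on $\U(\n^-)$ is multiplicative, that is $\U(\n^-)_s\cdot \U(\n^-)_t\subseteq \U(\n^-)_{s+t}$, so that the Poincar\'e--Birkhoff--Witt theorem identifies the associated graded algebra $\gr \U(\n^-)$ with the commutative symmetric algebra $S(\n^-)$. The definition $V(\la)_s=\U(\n^-)_s v_\la$ together with multiplicativity immediately gives $\U(\n^-)_s\cdot V(\la)_t\subseteq V(\la)_{s+t}$, so the $\U(\n^-)$-action descends to a well-defined action of $\gr \U(\n^-)=S(\n^-)$ on $\gr V(\la)$. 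Since $v_\la\in V(\la)_0$ and $V(\la)_s=\U(\n^-)_s v_\la$, taking associated graded yields $\gr V(\la)=S(\n^-) v_\la$.

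For the $\U(\n^+)$-structure, the crucial point is that $\n^+$ preserves the filtration, i.e.\ $\n^+ V(\la)_s\subseteq V(\la)_s$. I would prove this by induction on $s$; the base case $s=0$ is $\n^+ v_\la=0$. For the inductive step, any element of $V(\la)_s$ is a sum of vectors of the form $yw$ with $y\in\n^-$ and $w\in V(\la)_{s-1}$, and for $x\in\n^+$ one has $x\cdot yw=y(xw)+[x,y]w$. The induction hypothesis gives $xw\in V(\la)_{s-1}$, hence $y(xw)\in V(\la)_s$. Decomposing $[x,y]\in\g$ along $\n^+\oplus\h\oplus\n^-$, the $\n^-$-piece sends $w$ into $V(\la)_s$ directly, while the $\n^+$- and $\h$-pieces act on $w\in V(\la)_{s-1}$ and so land in $V(\la)_{s-1}\subseteq V(\la)_s$ by the inductive hypothesis. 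This last step uses the parallel (and simpler) statement $\h V(\la)_s\subseteq V(\la)_s$, which I would establish first by an analogous induction, moving $h$ past each $y_i$ via $[h,y_i]=-\alpha(h)y_i$ for a root vector $y_i$ of weight $-\alpha$.

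Having established filtration preservation, each $x\in\n^+$ induces a degree-zero operator on $\gr V(\la)$, and these operators automatically satisfy the Lie bracket relations of $\n^+$ because they do on $V(\la)$; hence the $\n^+$-action, and therefore the full $\U(\n^+)$-action, descends to $\gr V(\la)$. There is no real obstacle; the only mildly delicate point is the bookkeeping for the $\h$-component of $[x,y]$, which is handled by the preliminary induction showing that the Cartan subalgebra also preserves the filtration.
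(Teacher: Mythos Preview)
Your proposal is correct. The paper does not actually supply a proof of this lemma; it simply states the result, and in the introduction remarks that ``it is easy to see that $\n^+ V(\la)_s\hk V(\la)_s$''. Your argument spells out exactly this ``easy to see'' step via the commutator identity $x\cdot yw=y(xw)+[x,y]w$ together with the Cartan decomposition of $[x,y]$, and your treatment of the $S(\n^-)$-module structure is the standard associated-graded argument the paper implicitly relies on. So your approach is the same as the paper's, only with the routine details made explicit.
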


Our aims are:
\begin{itemize}
\item to describe $gr V(\la)$ as an  $S(\n^-)$-module, i.e. describe the ideal
$I(\la)\hk S(\n^-)$ such that $gr V(\la)\simeq S(\n^-)/I(\la)$;
\item to find a basis of $gr V(\la)$.
\end{itemize}
The description of the ideal is given in the introduction (see Theorem $A$).
To describe the basis
we introduce the notion of the symplectic Dyck paths:
\begin{dfn}\label{dyckpath}\rm
A symplectic Dyck path (or simply a path) is a sequence
\[
\bp=(p(0), p(1),\dots, p(k)), \ k\ge 0
\]
of positive roots satisfying the following conditions:
\begin{itemize}
\item[{\it a)}] the first root is simple, $p(0)=\al_i$ for some $ 1 \leq i \leq n$;
\item[{\it b)}] the last root is either simple or the highest root of 
a symplectic subalgebra, more precisely $p(k) = \al_j$ or $p(k) = \al_{\ol{j}}$ for some
$ i \le j \leq n$;
\item[{\it c)}] the elements in between obey the following recursion rule:
If $p(s)=\al_{r,q}$ with $r, q \in J$ then the next element in the sequence is of the form either
$p(s+1)=\al_{r,q+1}$  or $p(s+1)=\al_{r+1,q}.$ where $x+1$ denotes the smallest element in $J$ which is bigger than $x$.
\end{itemize}
\end{dfn}

To give a visual interpretation of the notion of a Dyck-path for $\msp_{8}$,
arrange the positive roots
in the form of a triangle. In this picture, a Dyck path is a path in the directed graph, starting at a simple root root and ending at one of the edges.

$$
\begin{array}[pos]{ccccccccccccc}
\alpha_{1,1} & \rightarrow & \alpha_{1,2} & \rightarrow & \alpha_{1,3}& \rightarrow & \alpha_{1,4}& \rightarrow & \alpha_{1,\ol{3}}& \rightarrow & \alpha_{1,\ol{2}}& \rightarrow & \alpha_{1,\ol{1}}\\
&& \downarrow && \downarrow && \downarrow && \downarrow && \downarrow && \\
&&\alpha_{2,2} & \rightarrow & \alpha_{2,3}& \rightarrow & \alpha_{2,4}& \rightarrow & \alpha_{2,\ol{3}}& \rightarrow & \alpha_{2,\ol{2}} &&\\
&& && \downarrow && \downarrow && \downarrow && && \\
&& &  & \alpha_{3,3}& \rightarrow & \alpha_{3,4}& \rightarrow & \alpha_{3,\ol{3}}&  &  &&\\
&& &&  && \downarrow && && && \\
&& &  & &  & \alpha_{4,4}& & &  &  &&\\
\end{array}
$$

Denote by $\D$ the set of all Dyck-paths.
For a dominant weight $\la=\sum_{i=1}^n m_i\omega_i$
let $P(\lam)\subset \R^{n^2}_{\ge 0}$ be the polytope
\begin{equation}
\label{polytopeequation}
P(\lam):=\bigg\{(s_ \al)_{\al> 0}\mid \forall\bp\in\D:
\begin{array}{l}
\text{ If }p(0)=\al_i,p(k)=\al_j, \text{ then }\\
s_{p(0)} + \dots + s_{p(k)}\le m_i + \dots + m_j,\\
\text{ if } p(0)=\al_i,p(k)=\al_{\ol{j}}, \text{ then }\\
s_{p(0)} + \dots + s_{p(k)}\le m_i + \dots + m_n\\
\end{array}\bigg\},
\end{equation}
and let $S(\la)$ be the set of integral points in $P(\lam)$.

\vskip 5pt\noindent
For a multi-exponent $\bs=\{s_ \beta\}_{\beta>0}$, $s_ \beta\in\Z_{\ge 0}$, let $f^\bs$ be the element
\[
f^\bs=\prod_{\beta\in R^+} f_ \beta^{s_ \beta}\in S(\fn^-).
\]

In the next two sections we prove the following theorem (Theorem $B$ from
the introduction), which immediately implies Corollary~\ref{application-cor}.

\begin{thm}\label{basis}
The set $f^\bs v_\la$, $\bs\in S(\la)$, forms a basis of $gr V(\la)$.
\end{thm}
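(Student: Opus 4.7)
My plan for Theorem \ref{basis} follows the standard two-step pattern: first establish that the elements $\{f^\bs v_\la : \bs \in S(\la)\}$ span $gr V(\la)$, then prove they are linearly independent.

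\textbf{Spanning.} Fix a total order $\prec$ on monomials in $S(\n^-)$, say the homogeneous lexicographic order induced by reading the triangle of positive roots above row by row, left to right. The main technical lemma, which should constitute the bulk of Section 2, is: for each Dyck path $\bp=(p(0),\dots,p(k))$ with target $p(k)=\beta\in\{\al_{i,j}\}\cup\{\al_{i,\ol i}\}$ and each nonnegative distribution $\bt=(t_0,\dots,t_k)$ with $\sum_i t_i = (\la,\beta)+1$, there is an explicit element $u_{\bp,\bt}\in \U(\n^+)$, built as an ordered product of simple root vectors $e_\gamma$ prescribed by the transitions of $\bp$, such that
\[
u_{\bp,\bt}\cdot f_\beta^{(\la,\beta)+1} \;=\; c\prod_{i=0}^k f_{p(i)}^{t_i} + (\prec\text{-lower terms}),\qquad c\ne 0,
\]
in $S(\n^-)$. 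Theorem A places the left-hand side in $I(\la)$, giving a relation with the prescribed leading monomial. For $\bs\notin S(\la)$, some Dyck path $\bp$ violates its inequality and $\sum_i s_{p(i)} \ge (\la,\beta_\bp)+1$; picking $\bt$ with $0\le t_i\le s_{p(i)}$ and $\sum_i t_i = (\la,\beta_\bp)+1$, and multiplying by $f^{\bs-\bt}$, expresses $f^\bs v_\la$ modulo $I(\la)$ as a combination of strictly $\prec$-smaller monomials $f^{\bs'} v_\la$. Descending induction on $\prec$ yields spanning.

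\textbf{Linear independence.} I would induct on $\la$. For the base case $\la=\omega_k$ fundamental, the polytope $P(\omega_k)$ is cut out by very few inequalities, and $\sharp S(\omega_k)$ is matched with the classical $\dim V(\omega_k)$ by a bijection with a standard combinatorial index set for fundamental symplectic representations (e.g.\ symplectic column-strict tableaux). For the inductive step I would use the tensor product fact announced in the introduction: the embedding $V(\la+\mu)\hk V(\la)\T V(\mu)$ sending $v_{\la+\mu}\mapsto v_\la\T v_\mu$ descends to a PBW-graded embedding $gr V(\la+\mu)\hk gr V(\la)\T gr V(\mu)$. Combined with the Minkowski equality $S(\la+\mu)=S(\la)+S(\mu)$ of integral points, which must be established separately and follows from the $\la$-linearity of the Dyck inequalities, one analyzes the leading term of the Leibniz expansion $f^\bs(v_\la\T v_\mu)=\sum_{\bs_1+\bs_2=\bs}\binom{\bs}{\bs_1}\,f^{\bs_1}v_\la\T f^{\bs_2}v_\mu$ under a lexicographic order on decompositions $\bs=\bs_1+\bs_2$, and thereby extracts the linear independence of $\{f^\bs v_{\la+\mu}\}_{\bs\in S(\la+\mu)}$ inside the tensor product.

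\textbf{Main obstacle.} The technical heart is the key lemma in the spanning step, and the genuinely new phenomenon compared with the type $A$ argument of \cite{FFoL} is the presence of paths whose target is a barred root $\al_{i,\ol i}$, or whose interior crosses from the unbarred to the barred half of the alphabet $J$. For such paths the string of $e_\gamma$ operators defining $u_{\bp,\bt}$ must pass through the long simple root $\al_n$ of $\msp_{2n}$, and the ensuing commutator identities in $\U(\n^+)$ produce many competing monomials of the same total degree. Proving that only the prescribed Dyck monomial $\prod f_{p(i)}^{t_i}$ survives as the $\prec$-leading term, with all extraneous commutator contributions being genuinely $\prec$-lower, is where the symplectic combinatorics differs in substance from the type $A$ setting.
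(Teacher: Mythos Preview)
Your overall architecture matches the paper's, but two steps in the independence argument are real gaps rather than details. First, the lattice-point identity $S(\la+\mu)=S(\la)+S(\mu)$ does \emph{not} follow from the $\la$-linearity of the Dyck inequalities: linearity gives only the Minkowski sum of the real polytopes $P(\la+\mu)=P(\la)+P(\mu)$, while the integral-point statement is an extra normality property that fails for general families of lattice polytopes. The paper supplies the missing construction: for $\bs\in S(\la)$ and $i$ minimal with $m_i\ne 0$, it defines $\bm_i^\bs$ as the indicator of the $>$-minimal roots in $\supp(\bs)\cap R_i$ and proves, via a path-extension argument, that $\bm_i^\bs\in S(\om_i)$ and $\bs-\bm_i^\bs\in S(\la-\om_i)$. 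This is the combinatorial core of Section~3 and is exactly what makes a leading-term analysis of the Leibniz expansion go through. Second, you assume a graded embedding $gr\,V(\la+\mu)\hookrightarrow gr\,V(\la)\otimes gr\,V(\mu)$, but this is part {\it (iii)} of Theorem~\ref{maintheorem} and is deduced \emph{from} the basis, not used as input; a filtered injection need not induce an injection on associated gradeds without strictness. The paper instead proves linear independence of the $f^\bs(v_{\la-\om_i}\otimes v_{\om_i})$ directly in $V^a(\la-\om_i)\otimes V^a(\om_i)$ using the inductive basis of each factor, then lifts to the ungraded tensor product and invokes the classical Cartan-component inclusion $V(\la)\hookrightarrow V(\la-\om_i)\otimes V(\om_i)$ to conclude $\sharp S(\la)\le\dim V(\la)$.

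On the spanning side your diagnosis of the obstacle is correct, and the paper confirms that plain homogeneous lex does not suffice: it uses a refinement ``$\ord$'' (Definition~\ref{sp<}) that first compares the row-sum vector $d(\bs)=(s_{n,\bullet},\ldots,s_{1,\bullet})$ and only then breaks ties lexicographically, precisely to absorb the extra terms produced when the $\partial$-operators cross the bar. One further slip: your seed relation $f_\beta^{(\la,\beta)+1}$ with $\beta=p(k)$ cannot work in the unbarred case, since $p(k)=\al_j$ is simple and $f_{\al_j}$ is annihilated by every $\partial_\gamma$, so it cannot be spread along the path; the paper's seed for a path from $\al_i$ to $\al_j$ is $f_{\al_{i,j}}^{m_i+\cdots+m_j+1}$ (handled by reduction to \cite{FFoL}), and for barred paths it is the highest-root power $f_{1,\bar 1}^{\Sigma}$ after first reducing to $p(0)=\al_1$.
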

\proof
In Section 2 we show that the elements $f^\bs v_\la$, $\bs\in S(\la)$, span $gr V(\la)$, see
Theorem~\ref{spanCn}. In Section 3 we show that the elements are linear independent in $gr V(\la)$ (see Theorem~\ref{maintheorem}), which finishes
the proof.
\qed

\section{The spanning property}\label{spanningsection}
We start with writing down the powers of certain positive roots annihilating
a highest weight vector in an irreducible $\msp_{2n}$-module.

\begin{lem}
Let $\la=\sum_{i=1}^n m_i\omega_i$ be the $\msp_{2n}$-weight and let  $V(\la)$
be the corresponding highest weight module with highest weight vector $v_\la$.
Then
\begin{gather}
f_{\al_{i,j}}^{m_i+\dots +m_j+1}v_\la=0,\ 1\le i\le j\le n-1,\label{1}\\
f_{\alpha_{i, \ol{i}}}^{m_i+\dots + m_n+1}v_\la=0,\ 1\le i\le n.\label{2}
\end{gather}
\end{lem}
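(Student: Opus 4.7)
The plan is to reduce the vanishings \eqref{1} and \eqref{2} to a uniform $\msl_2$-vanishing statement applied to the root subalgebra attached to each positive root in question, and then to identify the resulting coroot pairing $\langle\la,\alpha^\vee\rangle$ with the combinatorial expression appearing in the exponent.

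First I would recall the following general fact: for each positive root $\alpha$ of $\msp_{2n}$, the vectors $e_\alpha$, $f_\alpha$ and $h_\alpha=\alpha^\vee\in\h$ span an $\msl_2$-subalgebra. Because $e_\alpha v_\la=0$ and $h_\alpha v_\la=\langle\la,\alpha^\vee\rangle v_\la$ with $\langle\la,\alpha^\vee\rangle\in\Z_{\ge 0}$, and because $V(\la)$ is finite-dimensional, the cyclic $\msl_2$-submodule $\U(\msl_2)v_\la\subset V(\la)$ must be isomorphic to the irreducible $\msl_2$-module of highest weight $\langle\la,\alpha^\vee\rangle$. Consequently $f_\alpha^{\langle\la,\alpha^\vee\rangle+1}v_\la=0$ for \emph{every} positive root $\alpha$.

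Next I would compute the coroot pairing in the two cases. Working in the standard $\varepsilon$-basis for type $\tt C_n$, so that $\alpha_i=\varepsilon_i-\varepsilon_{i+1}$ for $i<n$, $\alpha_n=2\varepsilon_n$ and $\omega_j=\varepsilon_1+\cdots+\varepsilon_j$, one has $\la=\sum_{i=1}^n l_i\varepsilon_i$ with $l_i=m_i+\cdots+m_n$. Telescoping the definitions gives $\alpha_{i,j}=\varepsilon_i-\varepsilon_{j+1}$ for $j\le n-1$, a short root for which $\alpha^\vee=\alpha$, and $\alpha_{i,\ol i}=2\varepsilon_i$, a long root for which $\alpha^\vee=\varepsilon_i$. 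Hence
$$\langle\la,\alpha_{i,j}^\vee\rangle=l_i-l_{j+1}=m_i+\cdots+m_j,\qquad \langle\la,\alpha_{i,\ol i}^\vee\rangle=l_i=m_i+\cdots+m_n,$$
and substituting these integers into the statement of the preceding paragraph produces exactly \eqref{1} and \eqref{2}.

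The only conceptual ingredient is the universal $\msl_2$-vanishing argument, and it applies without modification to non-simple positive roots; the rest is routine coroot bookkeeping in type $\tt C_n$, so I do not anticipate any real obstacle.
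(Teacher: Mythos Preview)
Your proof is correct and follows exactly the paper's approach: the paper's own proof simply invokes the $\msl_2$-triple $\{e_\alpha,h_\alpha,f_\alpha\}$ attached to each relevant root and appeals to ``the $\msl_2$-theory'', and you have merely spelled out the details of that appeal and carried out the coroot computation explicitly.
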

\begin{proof}
For each positive root $\al$ we have the corresponding $\msl_2$-triple
$\{e_\al$, $h_\al$, $f_\al\}$.
Now the lemma follows immediately from the $\msl_2$-theory.
\end{proof}

In the following we use the differential operators $\pa_\al$ defined by
$$
\pa_\al f_\beta=
\begin{cases}
f_{\beta-\al},\  \text{ if }  \beta-\al\in\triangle^+,\\
0,\ \text{ otherwise}.
\end{cases}
$$
As in the ${\tt A}_n$-case (see \cite{FFoL}),
we have a natural action of $U(\n^+)$ on
$S(\n^-)$ coming from the natural action of $U(\n^+)$ on $S(\g)$
and the identification $S(\n^-)\simeq S(\g)/S(\n^-)S_+(\h\oplus\n^+)$.
The operators $\pa_\al$ satisfy the property
\[
\pa_\al f_\beta = c_{\al,\beta} (\text{ad}\, e_\al)(f_\beta),
\]
where $c_{\al,\beta}$ are some non-zero constants.
In what follows we sometimes use the equality $\al_{i,\ol{n}}=\al_{i,n}$.
\begin{lem}\label{pabeal}
The only non-trivial vectors of the form $\pa_\beta f_\al$, $\al,\beta>0$ are as follows:
for $\al=\al_{i,j}$, $1\le i\le j\le n$
\begin{equation}\label{sp1}
\pa_{i,s}f_{i,j}=f_{s+1,j},\ i\le s<j,\quad
\pa_{s,j}f_{i,j}=f_{i,s-1},\ i< s\le j,
\end{equation}
and for $\al=\al_{i,\ol{j}}$, $1\le i\le j\le n$
\begin{gather}\label{sp2}
\pa_{i,s}f_{i,\ol{j}}=f_{s+1,\ol{j}},\ i\le s <j,\quad
\pa_{i,s}f_{i,\ol{j}}=f_{j,\ol{s+1}},\ j\le s,\quad
\pa_{i,\ol{s}}f_{i,\ol{j}}=f_{j,s-1},\ j<s,\\
\label{sp3}
\pa_{s+1,\ol{j}}f_{i,\ol{j}}=f_{i,s},\ i\le s <j,\quad
\pa_{j,\ol{s+1}}f_{i,\ol{j}}=f_{i,s},\ j\le s,\quad
\pa_{j,s-1}f_{i,\ol{j}}=f_{i,\ol{s}},\ j<s.
\end{gather}
\end{lem}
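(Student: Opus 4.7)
The content of the lemma is a direct unpacking of the definition: $\pa_\beta f_\al = f_{\al-\beta}$ when $\al-\beta\in R^+$ and vanishes otherwise, so the statement asserts precisely that the listed formulas enumerate every decomposition $\al = \beta + \gamma$ with $\beta, \gamma \in R^+$, indexed by the target $\al$. Thus the proof is a combinatorial enumeration based on the simple-root expansions: $\al_{i,j}$ has coefficient $1$ on each of $\al_i, \ldots, \al_j$ and $0$ elsewhere, while $\al_{i,\ol j}$ (for $j < n$) has coefficient $1$ on $\al_i, \ldots, \al_{j-1}$ and on $\al_n$, and coefficient $2$ on $\al_j, \ldots, \al_{n-1}$. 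The identification $\al_{i,\ol n} = \al_{i,n}$ unifies the boundary.

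For the short target $\al = \al_{i,j}$, every coefficient is at most $1$, so both summands of any decomposition must themselves be short roots $\al_{p,q}$ with $\{p, \ldots, q\} \subseteq \{i, \ldots, j\}$, and their supports must partition $\{i, \ldots, j\}$ into two subintervals. This forces $\al_{i,j} = \al_{i,s} + \al_{s+1,j}$ for some $i \le s < j$, yielding the two identities of \eqref{sp1}.

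For the long target $\al = \al_{i,\ol j}$ with $j < n$, I would enumerate by the type of $\beta$ (short $\al_{p,q}$ or long $\al_{p,\ol q}$) and by its left endpoint. The coefficient pattern $(1,\ldots,1,2,\ldots,2,1)$ of $\al$ on $\{i,\ldots,n\}$ heavily constrains $\gamma = \al - \beta$: any $\beta$ whose removal leaves a gap in $\gamma$'s support, or whose $2$-block extends strictly to the left of $\al$'s, is ruled out as producing a disconnected or negative-coefficient vector. What survives is exactly six families, three in which $\beta$ starts at $i$, yielding the three formulas of \eqref{sp2}, and three in which $\beta$ starts to the right of $i$, yielding \eqref{sp3}. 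In each surviving case the shape of $\gamma$ is recovered uniquely by coefficient subtraction.

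The main obstacle is purely bookkeeping: confirming that every valid decomposition corresponds to exactly one of the stated formulas, with none missing and none duplicated. The trickiest point is the boundary coalescence $\al_{i,\ol n} = \al_{i,n}$, where formulas phrased separately in terms of $s$ and $\ol s$ collapse at $s = n$; this coincidence must be tracked carefully so that no case is omitted or double-counted.
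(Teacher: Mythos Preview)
Your proposal is correct: the lemma is exactly the enumeration of all decompositions $\al=\beta+\gamma$ into two positive roots, and your case analysis based on the simple-root coefficient vectors $(1,\ldots,1)$ for $\al_{i,j}$ and $(1,\ldots,1,2,\ldots,2,1)$ for $\al_{i,\ol j}$ is the right way to carry it out. The paper itself does not give a proof of this lemma at all; it states the formulas, illustrates them with a picture of the ${\tt C}_5$ root arrangement, and moves on, treating the result as a routine verification from the explicit description of $R^+$. So your outline actually supplies more than the paper does, and is consistent with what the authors evidently have in mind.
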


Let us illustrate this lemma by the following picture in type ${\tt C}_5$.
\vskip 5pt
\[
\begin{picture}(200,150)
\multiput(0,150)(30,0){6}{\circle*{3}}
\put(180,150){\circle*{10}}
\multiput(210,150)(30,0){2}{\circle{3}}

\multiput(30,120)(30,0){7}{\circle{3}}
\put(180,120){\circle*{3}}

\multiput(60,90)(30,0){5}{\circle*{3}}

\multiput(90,60)(30,0){3}{\circle{3}}
\multiput(120,30)(30,0){1}{\circle{3}}
\end{picture}
\]

Here all circles correspond to the positive roots of the root system of type ${\tt C}_5$
in the following way:
in the upper row we have from left to right  $\al_{1,1}, \dots, \al_{1,5}, \al_{1,\ol{4}},\dots, \al_{1,\ol{1}}$,
in the second row we have from left to right $\al_{2,2}, \dots, \al_{2,5}, \al_{2,\ol{4}},\dots, \al_{2,\ol{2}}$,
and the last line corresponds to the root $\al_{5,5}$. Now let us take the root
$\al_{1,\ol{3}}$ (which corresponds to the fat circle). Then all roots which
can be obtained by applying the operators $\pa_\beta$ are depicted as filled
small circles.

The following remark will be important for us.
\begin{rem}\label{remAC}
Formula \eqref{sp1} reproduces the picture in type $A_n$.
Formulas \eqref{sp1}, \eqref{sp2} and \eqref{sp3} resemble the
situation in type $A_{2n-1}$. The difference is that in the symplectic case
the roots $\pa_\beta f_\al$ with fixed $\al$ do not form two segments
(as in type $A$), but three segments.
\end{rem}

Our goal is to prove the following theorem.
\begin{thm}\label{spanCn}
\begin{itemize}
\item[{\it i)}] The vectors $f^\bs v_\la$, $\bs\in S(\la)$ span $gr V(\la)$.
\item[{\it ii)}] Let $I(\la)$ be the ideal $I(\la)=S(\n^-)(\U(\n^+)\circ R)$, where
$$
R=\mathrm{span}\{ f_{\al_{i,j}}^{m_i+\dots +m_j+1}, 1\le i\le j\le n-1,\
f_{\alpha_{i, \ol{i}}}^{m_i+\dots + m_n+1}, 1\le i\le n\}.
$$
There exists a monomial order on
$S(\n^-)=\bc[f_\alpha\mid\alpha>0]$, denoted by  `` $\ord$", such that
for any $\bs\not\in S(\la)$ there exists a homogeneous expression
(a straightening law) of the form
\begin{equation}\label{straighteninglaw}
f^\bs -\sum_{\bs\ord \bt} c_\bt f^\bt \in I(\la).
\end{equation}
\end{itemize}
\end{thm}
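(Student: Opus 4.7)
The plan is to prove (ii) directly and deduce (i) from it as a formal consequence. Given (ii), for any $\bs \notin S(\la)$ the vector $f^\bs v_\la$ equals modulo $I(\la)v_\la = 0$ a combination of $f^\bt v_\la$ with $\bs \ord \bt$, all of the same total degree; iterating with respect to the well-founded order $\ord$ on monomials of fixed degree expresses each $f^\bs v_\la$ as a linear combination of $\{f^\bt v_\la : \bt \in S(\la)\}$. I would take $\ord$ to be the homogeneous lexicographic order on $S(\n^-) = \bc[f_\al \mid \al > 0]$ associated to a total order on positive roots that reads the triangular picture of Section~\ref{secdef} row-by-row from top to bottom, choosing the within-row order carefully so that the leading-term analysis below works uniformly for every Dyck path.

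The central construction attaches to each Dyck path $\bp = (p(0), \dots, p(k))$, with its bound $N+1 = m_i + \dots + m_j + 1$ if $p(k) = \al_j$ (resp.\ $N+1 = m_i + \dots + m_n + 1$ if $p(k) = \al_{\ol j}$), and each composition $\bt^* = (t_0^*, \dots, t_k^*)$ of $N+1$, an explicit element
\[
\Phi(\bp, \bt^*) = \prod_{s=0}^{k} f_{p(s)}^{t_s^*} + (\ord\text{-lower terms}) \ \in\ \U(\n^+) \circ R.
\]
This element is produced by applying an ordered product $\pa_{\beta_l} \cdots \pa_{\beta_1}$ of the differential operators of Lemma~\ref{pabeal} to the single generator $f_\al^{N+1} \in R$, with $\al = \al_{i,j}$ (resp.\ $\al = \al_{i,\ol i}$). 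The operator sequence is read off from $\bp$ traversed backwards, each successive $\pa_{\beta_s}$ peeling off a factor $f_{p(s)}$ via the appropriate identity in Lemma~\ref{pabeal}. Unfolding with the Leibniz rule exhibits the target monomial $\prod_s f_{p(s)}^{t_s^*}$ with a non-zero multinomial-type coefficient, and all remaining monomials are $\ord$-smaller by the choice of order.

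To deduce the straightening law (\ref{straighteninglaw}): given $\bs \notin S(\la)$, pick a violating Dyck path $\bp$, so that $\sum_s s_{p(s)} \ge N+1$. Choose any multi-exponent $\bs'$ with $\supp \bs' \subseteq \bp$, $\bs' \le \bs$ componentwise, and $|\bs'| = N+1$; then $f^{\bs - \bs'} \Phi(\bp, \bs') \in I(\la)$. Expanding, its $\ord$-leading monomial is $f^\bs$ (up to a non-zero scalar), and every other monomial is strictly $\ord$-smaller. Rearranging produces (\ref{straighteninglaw}).

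The main obstacle is the leading-term analysis for the construction of $\Phi(\bp, \bt^*)$ in type $C$. As warned in Remark~\ref{remAC}, the symplectic derivatives $\pa_\beta f_\al$ organize into three families rather than two, so a given target monomial may be reached along combinatorially distinct operator sequences whose contributions must be tracked and shown not to cancel. I expect to resolve this by treating straight-ending Dyck paths (where the argument reduces to the type $A$ calculation in \cite{FFoL}) separately from bar-ending paths, and by handling the turning-point roots $\al_{i,n} = \al_{i, \ol n}$ with a dedicated combinatorial identity. A secondary point is to verify that one global choice of monomial order works for all Dyck paths simultaneously; placing bar-roots below their straight counterparts within each row of the triangle is the natural candidate, and I would check that this choice makes the target monomial $\ord$-leading in every $\Phi(\bp, \bt^*)$.
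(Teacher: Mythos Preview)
Your overall architecture matches the paper's: deduce (i) from (ii), reduce to a multi-exponent $\bs$ supported on a single violated Dyck path, and manufacture an element of $I(\la)$ with $\ord$-leading monomial $f^{\bs}$ by applying $\pa$-operators to some $f_\al^{N+1}\in R$. But the two places where you leave things unspecified are exactly where the proof lives, and in both places the paper's choices differ from what you suggest.

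\textbf{The monomial order.} The order $\ord$ used in the paper is \emph{not} a homogeneous lexicographic order for any ordering of the variables. After total degree it first compares the row-sum vector $d(\bs)=(s_{n,\bullet},s_{n-1,\bullet},\dots,s_{1,\bullet})$, declaring $\bs\ord\bt$ when $d(\bs)<d(\bt)$, and only then breaks ties by the variable lex order $>$ of~(\ref{order}). The reason is precisely the phenomenon of Remark~\ref{remAC}: in type ${\tt C}$ the operator $\pa_{1,q}$ applied to $f_{1,\bar j}$ with $j\le q$ yields $f_{j,\ol{q+1}}$, i.e.\ it moves weight into a \emph{deeper} row. The error monomials produced this way in the expansion of $\Delta_1 f_{1,\bar 1}^\Sigma$ acquire a strictly larger $d$-vector than the target (this is the content of Lemma~\ref{B} and Corollary~\ref{unimportantmonomial}), which is what makes them $\ord$-smaller. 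There is no evident reason such a monomial is lex-smaller for any fixed variable order: the factor removed from row $1$ and the factor created in row $j$ sit in different columns, so the comparison depends on individual exponents, not on the row-sums that actually control the situation. Your hope that a careful within-row order rescues pure lex is not substantiated, and the paper explicitly needs the $d$-comparison as a separate tier.

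\textbf{The operator sequence.} The paper does not ``traverse $\bp$ backwards''. For a bar-ending path with $p(0)=\al_1$, $p(k)=\al_{i,\bar i}$ it applies $\Delta_2\Delta_1$ to $f_{1,\bar 1}^{\Sigma}$, where
\[
\Delta_1=\pa_{1,i-1}^{\,s_{\bullet,\bar i}+s_{i,\bullet}}\,\delta_3\,\delta_2\,\delta_1,
\qquad
\Delta_2=\pa_{1,1}^{s_{2,\bullet}}\cdots\pa_{1,i-2}^{s_{i-1,\bullet}},
\]
and $\delta_1,\delta_2,\delta_3$ are built from operators $\pa_{1,\bar k}$, $\pa_{1,k}$, $\pa_{k,\bar k}$ that bear no relation to the edges of $\bp$. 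The effect of $\Delta_1$ is to produce an intermediate monomial $f^{\bs'}$ concentrated in rows $1$ and $i$ (Lemma~\ref{B}); then $\Delta_2$ redistributes the row-$1$ part over rows $2,\dots,i-1$ (Lemma~\ref{straightlemma}). Straight-ending paths are handled by quoting the $\msl_n$ result of \cite{FFoL} inside the Levi subalgebra, and bar-ending paths with $p(0)=\al_i$, $i>1$, by induction on the rank via the subalgebra $\msp_{2(n-i+1)}$. Your ``peel off $f_{p(s)}$ along the path'' picture does not produce this sequence, and without the specific $\Delta_1,\Delta_2$ the leading-term claim has no argument behind it.

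In summary, you have correctly identified the skeleton and correctly flagged the obstacle; the gap is that the resolution genuinely requires the two-tier order built from $d(\bs)$ and the particular operators $\Delta_1,\Delta_2$, neither of which your proposal supplies.
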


\begin{rem}
In the following we refer to $(\ref{straighteninglaw})$ as a {\it straightening law} for
the polynomial ring $S(\n^-)=\bc[f_\alpha\mid\alpha>0]$ with respect to the ideal $I(\la)$.
Such a straightening law implies that in the quotient ring $S(\n^-)/I(\la)$ we can express
$f^\bs$ for $\bs\not\in S(\la)$ as a linear combination of monomials which are
smaller in the monomial order than $f^\bs$, but of the same total degree since the
expression in $(\ref{straighteninglaw})$ is homogeneous.
\end{rem}

We show first that {\it ii)} implies {\it i)}:
\begin{proof} {\bf [{\it ii)} $\Rightarrow ${\it i)}]} The elements in $R$ obviously
annihilate $v_\la\in gr V(\la)$, and so do the elements of $\U(\n^+)\circ R$,
and hence so do the
elements of the ideal $I(\la)$. As a consequence we get a surjective map $S(\n^-)/I(\la)\rightarrow gr V(\la)$.

Suppose $\bs\not\in S(\la)$. We know by {\it ii)} that
$f^\bs = \sum_{\bs\ord \bt} c_\bt f^\bt$
in $S(\n^-)/I(\la)$. If some $\bt$ with nonzero coefficient $c_\bt$ is not an element of $S(\la)$,
then we can again apply a straightening law and replace $f^\bt$ by a linear combination of
smaller monomials. Since there are only finite number of monomials
of the same total degree, by repeating the procedure if necessary, after a finite number of steps
we obtain an expression of $f^\bs$ in $S(\n^-)/I(\la)$ as a linear combination of elements $f^\bt$, $\bt\in S(\la)$.
It follows that the set $\{ f^\bt\mid \bt\in S(\la)\}$ is a spanning set for $S(\n^-)/I(\la)$, and hence, by the
surjection above, we get a spanning set $\{ f^\bt v_\la\mid \bt\in S(\la)\}$ for $gr V(\la)$.
\end{proof}

To prove the second part we need to define the total order. We start by defining a total
order on the variables:
\begin{equation}\label{order}
\begin{array}{rcl}
&f_{n,n}&>\\
f_{n-1,\overline{n-1}}>&f_{n-1, n}&>f_{n-1, n-1}>\\
f_{n-2, \overline{n-2}}>f_{n-2, \overline{n-1}}>&f_{n-2, n}&>f_{n-2, n-1}>f_{n-2, n-2}>\\
\ldots>&\ldots&>\ldots >\\
f_{1, \overline{1}}>f_{1, \overline{2}}>\ldots >f_{1, \overline{n-1}}>&f_{1,n}&>f_{1,n-1}>\ldots>f_{1,2}>f_{1,1}.
\end{array}
\end{equation}
We use the same notation for the induced homogeneous lexicographic ordering on the monomials.
Note that this monomial order $>$ is not the order $\ord$. To define the latter, we need some more notation.
Let
\begin{gather*}
s_{\bullet, j}=\sum_{i=1}^j s_{i,j},\quad s_{\bullet, \ol{j}}=\sum_{i=1}^j s_{i,\ol{j}}, \\
s_{i,\bullet}=\sum_{j=i}^n s_{i,j} + \sum_{j=i}^{n-1} s_{i,\ol{j}}.
\end{gather*}
Define a map $d$ from the set of multi-exponents $\bs$ to $\Z_{\ge 0}^n$:
\[
d(\bs)=(s_{n,\bullet},s_{n-1,\bullet},\dots,s_{1,\bullet}).
\]
So, $d(\bs)_i=s_{n-i+1,\bullet}$.
We say $d(\bs)>d(\bt)$ if there exists an $i$ such that
$$
d(\bs)_1=d(\bt)_1, \dots, d(\bs)_i=d(\bt)_i, d(\bs)_{i+1} > d(\bt)_{i+1}.
$$
\begin{dfn}\label{sp<}
For two monomials $f^\bs$ and $f^\bt$  we say $f^\bs \ord f^\bt$ if either
\begin{itemize}
\item[{\it a)}] the total degree of $f^\bs$ is greater than the total degree of $f^\bt$;
\item[or {\it b)}] both have the same total degree, but $d(\bs) < d(\bt)$;
\item[or {\it c)}] both have the same total degree, $d(\bs) = d(\bt)$, but $f^\bs > f^\bt$.
\end{itemize}
\end{dfn}
In words, if both have the same total degree, this definition says that $f^\bs$ is greater than $f^\bt$ if
$d(\bs)$ is smaller than $d(\bt)$,  or $d(\bs)=d(\bt)$ but $f^\bs > f^\bt$
with respect to the homogeneous lexicographic ordering on $\bc[f_\alpha\mid \alpha >0]$.

\begin{rem}
It is easy to check that ``$\ord$" defines a {\it monomial ordering}, i.e.,
if $f^\bs\ord f^\bt$ and $f^\bm\not=1$, then
$$
f^{\bs}f^{\bm}=f^{\bs+\bm}\ord  f^{\bt}f^{\bm}=f^{\bt+\bm}\ord f^\bt.
$$
\end{rem}

Slightly abusing notations, we use the same symbol $\ord$  also for the multi-exponents: we write
$\bs \ord \bt$ if and only if $f^\bs \ord f^\bt$.

\vskip 3pt\noindent
{\it Proof of Theorem  \ref{spanCn} {\it ii)}}.
We discuss first some reduction steps. Let $\bs$ be a multi-exponent violating
some of the Dyck paths condition from the definition of $S(\la)$ and let $\bp$
be a corresponding Dyck path. We write $\bs$ as a sum $\bs=\bs^1+\bs^2$,
where $\bs^1$ is defined as follows: $s_{\al}^1=s_{\al}$ if $\alpha\in\bp$
and $s_{\al}^1=0$ if $\alpha\not\in\bp$, so $s_{\al}^1$ has support (i.e.
nonzero entries) only on $\bp$. Now obviously we still have $\bs^1\not\in S(\la)$.
If we have a straightening law for $f^{\bs^1}$:
$$
f^{\bs^1} - \sum_{\bs^1\ord \bt} c_\bt f^\bt \in I(\la)
$$
then multiplication by $f^{\bs^2}$ gives
a straightening law for $f^{\bs}=f^{\bs^1}f^{\bs^2}$, because $\ord$ is a monomial
order.

So it suffices to find a straightening law for those $\bs\not\in S(\la)$
which are supported on a Dyck path $\bp$ and $\bs$ violates the Dyck path condition for
$S(\la)$ for the path $\bp$.

Suppose first that the Dyck path $\bp$ is such that $p(0)=\al_i$, $p(k)=\al_j$ for some $1\le i\le j< n$.
We are going to show that in this case we get a straightening
law by the corresponding result
for the Lie algebra $\mathfrak{sl}_n$ from \cite{FFoL}.
In fact, consider the Lie subalgebra $M\subset  \mathfrak{sp}_{2n}$
generated by the elements $e_{\alpha_{i,i}}, f_{\alpha_{i,i}}, h_{\alpha_{i,i}}$, $1\le i <n$. This subalgebra
is isomorphic to  $\mathfrak{sl}_n$. Let $M=\n^+_M\oplus\h_M\oplus \n^-_M$ be the Cartan decomposition
obtained by setting $\n^+_M=\n^+\cap M$, $\n^-_M=\n^-\cap M$ and $\h_M=\h\cap M$. Let
$$
R_M=\mathrm{span}\{ f_{\al_{i,j}}^{m_i+\dots +m_j+1}, 1\le i\le j\le n-1\}\subset S(\n^-_M)\subset S(\n^-).
$$
Then $R_M\subset R$ and $U(\n^+_M)\circ R_M\subset U(\n^+)\circ R$. Set $\la_M=\sum_{i=1}^{n-1} m_i\om_i$ and
let $I_M(\la_M)$ be the ideal $I_M(\la):= S(\n^-_M)(U(\n^+_M)\circ R_M)\subset S(\n^-_M)$.

We have an obvious inclusion $I_M(\la_M)\subset I(\la)$. But note that the ideal $I_M(\la_M)$ is 
considered in \cite{FFoL} for the $\mathfrak{sl}_n$-case
(recall, $M\simeq \mathfrak{sl}_n$).
Also the Dyck path considered here is an $\msl_n$ Dyck path,
because all roots occurring in the path are roots in the subroot-system
associated to the Lie-subalgebra $M$.
It follows by \cite{FFoL} that we have a straightening law
\begin{equation}\label{slstraight}
f^\bs -\sum_{\bs>\bt} c_\bt f^\bt \in I_M(\la_M)\subset I(\la).
\end{equation}
It remains to show that in the sum above we can replace ``$>$" by ``$\ord$".
For this we need to recall the proof in the type ${\tt A}$-case. Recall that we work now with the
subalgebra
$M\simeq{\mathfrak{sl}}_n\subset {\mathfrak{sp}}_{2n}$. To get the straightening law above,
one starts with the element $f_{1,i}^{s_{p(0)}+\dots + s_{p(k)}}\in R_M$. 
Applying the $\pa$-operators
(see \cite{FFoL}) one shows that
\[
B=f_{1,1}^{s_{\bullet, 1}}f_{1,2}^{s_{\bullet ,2}}\dots f_{1,i}^{s_{\bullet, i}} \in R_M.
\]
One applies then the following $\pa$-operators to $B$ to get
\begin{equation}\label{Aoperator}
A=\pa_{1,1}^{s_{2,\bullet}}\pa_{1,2}^{s_{3,\bullet}}\dots \pa_{1,i-1}^{s_{i,\bullet}} B\in R_M
\end{equation}
(since $\bs$ is supported on $\bp$ and $p(k)=\al_j$, $j<n$, we have $s_{l,\bullet}=\sum_{j=l}^{n-1} s_{l,j}$).
We show in \cite{FFoL} that
\begin{equation}
A =\sum_{\bt\le \bs} c_\bt F^\bt
\end{equation}
for some $c_\bs\ne 0$,
which gives rise to the straightening law in $(\ref{slstraight})$.
Now in this special case Lemma~\ref{pabeal} implies that the application of the $\pa$-operators
in $(\ref{Aoperator})$ produces only summands such that $d(\bs)=d(\bt)$
for any $\bt$ occurring in the sum with
a nonzero coefficient. Hence we can replace ``$>$" by ``$\ord$" in $(\ref{slstraight})$, which finishes the proof
of the theorem in this case.

Now assume $p(0)=\al_{i,i}$ and $p(k)=\al_{j,\ol{j}}$ for some $j\ge i$.
We include the case $j=n$ by writing
$\al_{n,n}=\al_{n,\ol{n}}$. We proceed by induction on $n$.
For $n=1$ we have ${\mathfrak{sp}}_{2}={\mathfrak{sl}}_{2}$, so we can refer
to \cite{FFoL}.
Now assume we have proved the existence of a straightening law for all symplectic algebras
of rank strictly smaller than $n$. If $i>1$, then the Dyck path is also a Dyck path
for the symplectic subalgebra
$L\simeq \mathfrak{sp}_{2n-2(i-1)}$ generated by  $e_{\alpha_{k,k}}, f_{\alpha_{k,k}}, h_{\alpha_{k,k}}$, $i\le k \le n$.
Let $\n^+_L, \n^-_L$ etc. be defined by the intersection of $\n^+,\n^-$ etc. with $L$ and set $\la_L=\sum_{k=i}^n m_k\om_k$.
It is now easy to see that the straightening law for $f^\bs$ viewed as an element in $S(\n^-_L)$
with respect to $I_L(\la_L)$ defines also a straightening law for $f^\bs$ viewed as an element in
$S(\n^-)$ with respect to $I(\la)$.

So from now on we fix $p(0)=\al_1$ and $p(k)=\al_{i,\ol{i}}$ for some $i\in\{1,\dots,n\}$.
For a multi-exponent $\bs$ supported on $\bp$, set
$$
\Sigma=\sum_{l=0}^k s_{p(l)} > m_1+\dots +m_n.
$$
We have obviously $f_{1,\bar{1}}^{\Sigma}\in I(\lam)$. We consider now two
operators
$$
\Delta_1:=\pa_{1,i-1}^{s_{\bullet,\bar i}+s_{i,\bullet}}
\underbrace{
\pa_{i+1,\ol{i+1}}^{s_{\bullet,i}}\ldots\pa_{n,\bar n}^{s_{\bullet,n-1}}
}_{\delta_3}
\underbrace{
\pa_{1,n-1}^{s_{\bullet,n-1}+s_{\bullet,\ol{n}}}
\ldots
\pa_{1,i}^{s_{\bullet,i}+s_{\bullet,\ol{i+1}}}
}_{\delta_2}
\underbrace{
\pa_{1,\bar i}^{s_{\bullet,i-1}}\dots \pa_{1,\bar 3}^{s_{\bullet,2}}\pa_{1,\bar 2}^{s_{\bullet,1}}
}_{\delta_1}
$$
(so $\Delta_1:=\pa_{1,i-1}^{s_{\bullet,\bar i}+s_{i,\bullet}}\delta_3\delta_2\delta_1$)
and
$$
\Delta_2:=\pa_{1,1}^{s_{2,\bullet}}\pa_{1,2}^{s_{3,\bullet}}\dots \pa_{1,i-2}^{s_{i-1,\bullet}}.
$$
We will show that
\begin{equation}\label{goalstraightening}
\Delta_2\Delta_1 f_{1,\bar{1}}^{\Sigma}=c_\bs f^\bs + \sum_{\bs\ord\bt} c_\bt f^\bt
\end{equation}
with complex coefficients $c_\bs, c_\bt$, where $c_\bs\not=0$. Since
$\Delta_2\Delta_1 f_{1,\bar{1}}^{\Sigma}\in I(\lam)$,
the proof of $(\ref{goalstraightening})$ finishes the proof of the theorem.
A first step in the proof of $(\ref{goalstraightening})$ is the following lemma.

Recall the alphabet $J = \{1, \ldots, n, \ol{n-1}, \ldots, \ol{1}\}$.
Let $q_1,\dots,q_i\in J$ be a sequence of increasing elements defined by
\[
q_k=\max\{l\in J:\ \al_{k,l}\in\bp\}.
\]
For example, $q_i=\ol i$. The roots of $\bp$ are then of the form
\[
\al_{1,1},\dots, \al_{1,q_1},\al_{2,q_1},\dots,\al_{2,q_2},\dots,
\al_{i,q_{i-1}},\dots,\al_{i,q_i}.
\]

\begin{lem}\label{B}
Set $f^{\bs'}=f_{1,1}^{s_{\bullet,1}}f_{1,2}^{s_{\bullet,2}} \ldots
f_{1,q_{i-1}}^{s_{\bullet,q_{i-1}}-s_{i,q_{i-1}}} f_{i,q_{i-1}}^{s_{i,q_{i-1}}}
\ldots f_{i,\bar i}^{s_{i,\bar i}}$.  Then $\Delta_1 f_{1,\bar{1}}^{\Sigma}$
is of the form
\begin{equation}\label{equationin2.8}
\Delta_1 f_{1,\bar{1}}^{\Sigma}=c_{\bs'} f^{\bs'} + \sum_{\bs'\ord\bt} c_\bt f^\bt
\end{equation}
such that $c_{\bs'}\not=0$. In addition, if $f^\bt $, $\bt\not=\bs'$,
is a monomial occurring in this sum, then one of the following statements holds:
\begin{itemize}
\item there exists an index $j$ such that
$d(\bt)_j>0$ for some $j\in \{1,2,...,n-~i\}$,
\item $d(\bt)_j=0$ for all $j\in \{1,2,...,n-i\}$ and $d(\bt)_{n-i+1}>s_{i,\bullet}$,
\item $d(\bt)=d(\bs')$ and
$f_{i,i}^{t_{i,i}}f_{i,i+1}^{t_{i,i+1}}\cdots f_{i,\bar i}^{t_{i,\bar i}}<
f_{i,i}^{s_{i,i}}f_{i,i+1}^{s_{i,i+1}}\cdots f_{i,\bar i}^{s_{i,\bar i}}$
\end{itemize}
\end{lem}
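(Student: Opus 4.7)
The plan is to apply the four blocks making up $\Delta_1$ — namely $\delta_1$, $\delta_2$, $\delta_3$, and the outermost $\partial_{1,i-1}^{s_{\bullet,\bar i}+s_{i,\bullet}}$ — in order to $f_{1,\bar 1}^\Sigma$, tracking the leading term under $\ord$ at each stage via Lemma~\ref{pabeal} and the Leibniz rule. Each $\partial_\beta$ is a derivation on $S(\n^-)$ that moves a single factor $f_\gamma$ to $f_{\gamma-\beta}$ (up to a nonzero scalar) only when $\gamma-\beta\in R^+$. When a power $\partial_\beta^m$ is distributed across a product of factors, the leading summand corresponds to the generic distribution in which each copy acts on a distinct "target" factor; all other summands come from a $\partial$ acting on a by-product of an earlier step, or from more than one $\partial$ iterating along the same chain $f_\gamma\mapsto f_{\gamma-\beta}\mapsto f_{\gamma-2\beta}$.

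Applying $\delta_1$ uses $\partial_{1,\bar k}f_{1,\bar 1}=c f_{1,k-1}$ and the fact that each $\partial_{1,\bar{k'}}$ annihilates the unbarred row-$1$ factors it already produced; the leading output is $\prod_{k=1}^{i-1}f_{1,k}^{s_{\bullet,k}}\cdot f_{1,\bar 1}^{\Sigma_1}$. Then $\delta_2$ uses $\partial_{1,k}f_{1,\bar 1}=c f_{1,\overline{k+1}}$ to produce $\prod_{k=i}^{n-1}f_{1,\overline{k+1}}^{s_{\bullet,k}+s_{\bullet,\overline{k+1}}}$, and $\delta_3$ splits each of these using $\partial_{k+1,\overline{k+1}}f_{1,\overline{k+1}}=c f_{1,k}$ into $f_{1,k}^{s_{\bullet,k}}f_{1,\overline{k+1}}^{s_{\bullet,\overline{k+1}}}$. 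Combining, the intermediate leading monomial after $\delta_3\delta_2\delta_1$ is
$$
A=\prod_{k=1}^{n-1}f_{1,k}^{s_{\bullet,k}}\prod_{l=i+1}^{n}f_{1,\bar l}^{s_{\bullet,\bar l}}\,f_{1,\bar 1}^{s_{\bullet,\bar i}},
$$
where the exponent of $f_{1,\bar 1}$ collapses to $s_{\bullet,\bar i}=s_{i,\bar i}$ by a direct telescope using $\Sigma=\sum_{\alpha\in\bp}s_\alpha$ and the fact that $\bs$ is supported on $\bp$.

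The outermost $\partial_{1,i-1}^{s_{\bullet,\bar i}+s_{i,\bullet}}$ is then applied to $A$. By Lemma~\ref{pabeal} the only nontrivial actions on factors of $A$ are $f_{1,\bar 1}\mapsto f_{1,\bar i}$, $f_{1,\bar i}\mapsto f_{i,\bar i}$ (from $\alpha_{1,\bar i}-\alpha_{1,i-1}=\alpha_{i,\bar i}\in R^+$), $f_{1,k}\mapsto f_{i,k}$ for $k\ge i$, and $f_{1,\bar l}\mapsto f_{i,\bar l}$ for $l\ge i+1$; every other factor is killed. The canonical distribution uses $2s_{i,\bar i}$ of the derivatives to run the chain $f_{1,\bar 1}^{s_{i,\bar i}}\mapsto f_{1,\bar i}^{s_{i,\bar i}}\mapsto f_{i,\bar i}^{s_{i,\bar i}}$, converts $s_{i,k}$ copies of $f_{1,k}$ into $f_{i,k}$ for each column $k\ge q_{i-1}$ on the path, and converts all $s_{i,\bar l}$ copies of $f_{1,\bar l}$ into $f_{i,\bar l}$ for $l=i+1,\dots,n-1$. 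A direct count shows that the total number of derivatives used equals $s_{\bullet,\bar i}+s_{i,\bullet}$, and the resulting monomial is precisely $f^{\bs'}$, scaled by a positive multinomial coefficient and the nonzero constants of Lemma~\ref{pabeal}.

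To finish, I would classify every alternative distribution of the $\partial$'s and show it contributes a term $f^\bt$ falling into one of the three bullets. A deviation in which some $\partial$ applies a formula producing a factor in a row $r>i$ (such as $\partial_{i,s}f_{i,j}=f_{s+1,j}$ for $s+1>i$) makes $d(\bt)_j>0$ for some $j\le n-i$ and hits the first bullet. A deviation in which some $f_{1,\bar 1}$ is left unprocessed along the canonical chain but is later forced into a row-$i$ factor, or where the $f_{1,\bar i}\mapsto f_{i,\bar i}$ step is doubled up so that an extra $f_{i,\ast}$ is created, increases $d(\bt)_{n-i+1}$ beyond $s_{i,\bullet}$ and hits the second bullet. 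Finally, a deviation that merely reshuffles conversions among the columns of row $i$ preserves $d(\bt)=d(\bs')$ but replaces a copy of $f_{i,k}$ (with $k$ high in $J$-order) by a copy of $f_{i,k'}$ with $k'<k$, which is strictly smaller under the lex order on row $i$ and thus hits the third bullet. The main obstacle is this last step: one must check that the canonical distribution is the unique lex-maximum among all admissible (row-weight preserving) distributions, which reduces to showing that any reshuffling within row $i$ depletes one of the higher-lex variables $f_{i,k}$ below its prescribed multiplicity $s_{i,k}$.
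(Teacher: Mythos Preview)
Your approach is the same as the paper's: factor $\Delta_1=\pa_{1,i-1}^{s_{\bullet,\bar i}+s_{i,\bullet}}\delta_3\delta_2\delta_1$, observe that $\delta_1$ acts exactly, track the leading monomial through $\delta_2$ and $\delta_3$ (the non-leading terms from $\delta_2$ land in rows $>i$ and stay there, giving the first bullet), and then analyse the final block $\pa_{1,i-1}^{\,\cdots}$ on the monomial $A$. There are, however, two genuine gaps in your write-up.

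First, the identity $s_{\bullet,\bar i}=s_{i,\bar i}$ is \emph{not} true in general: it fails precisely when $q_{i-1}=\bar i$, i.e.\ when the Dyck path reaches column $\bar i$ already in a row $<i$. In that case $s_{\bullet,\bar i}$ can exceed $s_{i,\bar i}$, and your ``canonical distribution'' uses only $s_{i,\bullet}+s_{i,\bar i}$ derivatives instead of the required $s_{\bullet,\bar i}+s_{i,\bullet}$. The correct canonical distribution first converts \emph{all} $s_{\bullet,\bar i}$ copies of $f_{1,\bar 1}$ to $f_{1,\bar i}$, and only then sends $s_{i,\bar i}$ of those on to $f_{i,\bar i}$; this also produces the factor $f_{1,\bar i}^{\,s_{\bullet,\bar i}-s_{i,\bar i}}$ required in $f^{\bs'}$ when $q_{i-1}=\bar i$.

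Second, your explanation of the second bullet is muddled. A factor $f_{1,\bar 1}$ ``left unprocessed'' stays in row~1 and cannot be ``forced into a row-$i$ factor'' by $\pa_{1,i-1}$; the map is $f_{1,\bar 1}\mapsto f_{1,\bar i}$, still row~1. The correct (and simpler) counting argument, which the paper uses, is this: applying $\pa_{1,i-1}$ to \emph{any} variable in $A$ other than $f_{1,\bar 1}$ lands in row~$i$, and applying it to $f_{1,\bar 1}$ stays in row~1. Since there are only $s_{\bullet,\bar i}$ copies of $f_{1,\bar 1}$ available, at most $s_{\bullet,\bar i}$ of the $s_{\bullet,\bar i}+s_{i,\bullet}$ applications can avoid row~$i$, so $d(\bt)_{n-i+1}\ge s_{i,\bullet}$ always, with equality forcing $a=s_{\bullet,\bar i}$. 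This is the clean dichotomy between bullets~2 and~3; once you have it, the lex-maximality argument you sketch for bullet~3 goes through (and the paper, like you, leaves that final step largely to the reader).
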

Before proving the lemma, we explain in the following corollary the reason why we need the lemma.
The corollary is proved after the proof of the lemma.
\begin{cor}\label{unimportantmonomial}
If $f^\bt\not= f^{\bs'}$ is a monomial occurring in $(\ref{equationin2.8})$,
then $\Delta_2 f^\bt$
is a sum of monomials $f^\bk$ such that $f^\bs \ord f^\bk $.
\end{cor}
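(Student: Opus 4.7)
The plan is to run a case analysis on $\bt$ according to the three alternatives of Lemma~\ref{B}, leveraging the following structural property of $\Delta_2$. By Lemma~\ref{pabeal}, each factor $\pa_{1,k}$ with $k \leq i-2$ annihilates $f_\alpha$ unless $\alpha$ lies in row $1$, and for such $\alpha$ the image $\pa_{1,k} f_\alpha$ lies in one of the rows $1, 2, \ldots, i-1$. Consequently, for every monomial $f^\bk$ appearing in $\Delta_2 f^\bt$, the multiplicities of the variables $f_\alpha$ with $\alpha$ in rows $r \geq i$ will agree with those in $f^\bt$; equivalently, $d(\bk)_j = d(\bt)_j$ for $j \in \{1, \ldots, n-i+1\}$. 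Since each $\pa$-operator preserves total degree, $f^\bk$ and $f^\bs$ will share the same total degree, so the $\ord$-comparison reduces to parts (b) and (c) of Definition~\ref{sp<}.

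For the first scenario of Lemma~\ref{B}, I would let $j_0 \in \{1, \ldots, n-i\}$ be the smallest index with $d(\bt)_{j_0} > 0$. Then $d(\bk)_j = 0 = d(\bs)_j$ for $j < j_0$ (using minimality together with the fact that the path $\bp$ has roots only in rows $\leq i$, so $d(\bs)_j = 0$ for all $j \leq n-i$), while $d(\bk)_{j_0} = d(\bt)_{j_0} > 0 = d(\bs)_{j_0}$. This gives $d(\bs) < d(\bk)$ in lex and hence $f^\bs \ord f^\bk$ by (b). The second scenario is strictly analogous, with the first disagreement occurring at $j_0 = n-i+1$ where $d(\bk)_{n-i+1} = d(\bt)_{n-i+1} > s_{i,\bullet} = d(\bs)_{n-i+1}$.

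The third scenario is the delicate one. Here $d(\bt) = d(\bs')$ forces $d(\bk)_j = d(\bs)_j$ for all $j \leq n-i+1$, while the row-$i$ part of $f^\bk$ coincides with that of $f^\bt$ (since $\Delta_2$ fixes the row-$i$ variables) and is by hypothesis strictly smaller in the monomial order \eqref{order} than the row-$i$ part of $f^{\bs'}$; the latter in turn coincides with the row-$i$ part of $f^\bs$. Because both $f^\bk$ and $f^\bs$ have trivial exponents on every variable of rows exceeding $i$, the first variable at which they differ in \eqref{order} will lie in row $i$, with $f^\bs$ having the larger exponent. When $d(\bk) = d(\bs)$, clause (c) of Definition~\ref{sp<} immediately yields $f^\bs \ord f^\bk$. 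When the $d$-vectors disagree at some $j > n-i+1$, I will need to verify that the first such disagreement satisfies $d(\bk)_j > d(\bs)_j$, so that (b) still applies.

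The hard part will be precisely this last verification. It will require tracking how $\Delta_2$ redistributes the row-$1$ mass: by Lemma~\ref{pabeal}, $\pa_{1,k} f_{1,q}$ produces a row-$(k+1)$ generator; $\pa_{1,k} f_{1,\bar q}$ produces a row-$(k+1)$ generator when $k < q$, a row-$q$ generator when $1 < q \leq k$, and (the sole exception) a row-$1$ generator when $q = 1$. The plan is to exploit the constraints on the row-$1$ structure of $\bt$ inherited from its appearance as a summand of $\Delta_1 f_{1,\bar 1}^{\Sigma}$, to argue that any shortfall in row $r \in \{2, \ldots, i-1\}$ of $\bk$ relative to $\bs$ must be compensated by an excess in some row $r' > r$ (or equivalently at an earlier $d$-vector position), forcing the first $d$-vector disagreement, if any, to satisfy $d(\bk)_j > d(\bs)_j$.
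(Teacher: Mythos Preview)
Your treatment of the first two alternatives of Lemma~\ref{B} is correct and matches the paper's argument. The divergence is in the third alternative, where you leave open whether $d(\bk)=d(\bs)$ and sketch a ``compensation'' argument for the hypothetical sub-case where they differ.

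In fact this sub-case is empty, and seeing why is the key step the paper uses. The point is that in the third alternative one necessarily has $t_{1,\ol{i-1}}=\cdots=t_{1,\ol 1}=0$. This is implicit in the proof of Lemma~\ref{B}: before the application of $\pa_{1,i-1}^{s_{\bullet,\bar i}+s_{i,\bullet}}$, the monomial in~(\ref{2.15}) has row-$1$ entries only in columns $1,\ldots,n,\ol{n-1},\ldots,\ol{i+1}$ and~$\ol 1$; the operator $\pa_{1,i-1}$ sends $f_{1,\bar 1}\mapsto f_{1,\bar i}$ (still row~$1$) and every other row-$1$ variable present to row~$i$. The condition $d(\bt)_{n-i+1}=s_{i,\bullet}$ then forces exactly $s_{\bullet,\bar i}$ of the applications to hit $f_{1,\bar 1}$, so no $f_{1,\bar 1}$-factor survives, and columns $\ol{i-1},\ldots,\ol 2$ were never populated to begin with. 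Once $t_{1,\ol j}=0$ for all $j\le i-1$, each $\pa_{1,k}$ in $\Delta_2$ (with $k\le i-2$) can only send a row-$1$ variable to row $k+1$: by Lemma~\ref{pabeal} the exceptional outputs $f_{j,\ol{k+1}}$ arise only from $f_{1,\ol j}$ with $j\le k$, and these are absent. Since $\pa_{1,k}$ is applied $s_{k+1,\bullet}$ times, every surviving monomial $f^\bk$ has exactly $s_{r,\bullet}$ variables in row $r$ for each $r$, i.e.\ $d(\bk)=d(\bs)$. Your row-$i$ comparison then finishes the argument via clause~(c) of Definition~\ref{sp<}.

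Your compensation plan is therefore unnecessary, and as stated it would be hard to complete without the observation above: if $\bt$ \emph{did} carry a factor $f_{1,\bar 1}$, then $\pa_{1,k}f_{1,\bar 1}=f_{1,\ol{k+1}}$ keeps the mass in row~$1$, and iterating such moves could produce a shortfall in rows $2,\ldots,i-1$ with no compensating excess in any deeper row. So the vanishing $t_{1,\ol{i-1}}=\cdots=t_{1,\ol 1}=0$ is really the crux of the third case, not an incidental constraint.
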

\vskip 4pt\noindent
{\it Proof of the lemma.\/}
One sees easily by induction that
$$
\delta_1(f_{1,\bar{1}}^{\Sigma})=f_{1,1}^{s_{\bullet,1}}f_{1,2}^{s_{\bullet,2}}
\ldots f_{1,i-1}^{s_{\bullet,i-1}}f_{1,\bar{1}}^{\Sigma-s_{\bullet,1}-s_{\bullet,2}-\ldots-s_{\bullet,i-1}}.
$$
Since $\al_{1,j}-\al_{1,\ell}$, $1\le j<i$, $i < \ell \le n$, and $\al_{1,j}-\al_{\ell,\bar\ell}$,
$1\le j<i$, $i < \ell \le n$, and $\al_{1,j}-\al_{1,i-1}$, $1\le j<i$,
are never positive roots,
all factors of $\delta_2$ and $\delta_3$, as well as $\pa_{1,i-1}$, annihilate the vector
\[
f^\bx=f_{1,1}^{s_{\bullet,1}}f_{1,2}^{s_{\bullet,2}}\ldots f_{1,i-1}^{s_{\bullet,i-1}}.
\]
Therefore
$$
\Delta_1(f_{1,\bar{1}}^{\Sigma})=
f^\bx\pa_{1,i-1}^{s_{\bullet,\bar i}+s_{i, \bullet}}
\delta_3\delta_2(f_{1,\bar{1}}^{\Sigma-s_{\bullet,1}-s_{\bullet,2}-\ldots-s_{\bullet,i-1}}).
$$

To visualize the following procedure, one should think
of the variables $f_{i,j}$ as being arranged in a triangle like in the picture after Lemma~\ref{pabeal},
or in the following example (type ${\tt C}_4$):
\begin{equation}\label{scheme}
\Skew(0: f_{11},f_{12} ,f_{13} ,f_{14} ,f_{1\bar{3}},f_{1\bar{2}},f_{1\bar{1}} |
1:f_{22} ,f_{23} ,f_{24} ,f_{2\bar{3}},f_{2\bar{2}} |2:f_{33} ,f_{34} ,f_{3\bar{3}}|3:f_{44} )
\end{equation}
With respect to the ordering ``$>$", the largest element is in the bottom row and the
smallest element is in the top row on the left side. We enumerate the rows and
columns like the indices of the variables, so the top row is the 1-st row, the bottom
row the $n$-th row, the columns are enumerated from  left to right,
so we have the $1$-st column on the left side and the most right one is the $\bar{1}$-st column.

The operator $\pa_{1,q}$, $1\le q\le n-1$, kills all $f_{1,j}$ for $1\le j\le q$,
$\pa_{1,q}(f_{1,j})=f_{q+1,j}$ for $j=q+1,\ldots,\ol{q+1}$,
$\pa_{1,q}(f_{1,\bar j})=f_{j, \ol{q+1}}$ for $j=1,\ldots, q$, and $\delta_{1,q}$
kills all $f_{k,\ell}$ for $k\ge 2$. Because of the set of indices of the
operators occurring in $\delta_2$, the operator applied to 
$f_{1,\bar{1}}^{\Sigma-s_{\bullet,1}-s_{\bullet,2}-\ldots-s_{\bullet,i-1}}$ never increases the zero 
entries in the first row, column $\bar i$ up to column $\bar 2$.
As a consequence, the application of $\delta_2$
produces  the monomial
$$
f^\bx f_{1,\ol{i+1}}^{s_{\bullet,i}+s_{\bullet,\ol{i+1}}}\cdots f_{1,\ol{n-1}}^{s_{\bullet,n-2}+s_{\bullet,\ol{n-1}}}
f_{1,n}^{s_{\bullet,n-1}+s_{\bullet,n}} f_{1,\bar{1}}^{s_{\bullet,\bar i}}+\sum c_\bk f^\bk,
$$
where the monomials $f^\bk$ occurring in the sum are such that the corresponding
triangle (see $(\ref{scheme})$) has at least one
non-zero entry in one of the rows between the $(i+1)$-th row and the $n$-th row
(counted from top to buttom).
This implies $d(\bk)_j>0$ for some $j=1,\ldots, n-i$. The operators
$\delta_3$ and $\pa_{1,i-1}^{s_{\bullet,\bar i}+s_{i, \bullet}}$ do not change this property,
because (in the language of the scheme $(\ref{scheme})$ above) the
operators $\pa_{j,\bar j}$ used to compose $\delta_3$ either kill a monomial or, in the
language of the scheme $(\ref{scheme})$, they subtract from an entry in the $\bar j$-th column, $k$-th row
and add to the entry in the same row, but $(j-1)$-th column.
The operator $\pa_{1,i-1}$
subtracts from the entries in the top row. Since the
entries in the top row, column $\ol{i-1}$ up to $\bar 2$, are zero,
it adds to the entries in the $i$-th row. The only exception is $\pa_{1,i-1}$
applied to $f_{1,\bar 1}$, the result is $f_{1,\bar i}$.
It follows that the monomials $f^{\bk'}$
occurring in $\pa_{1,i-1}^{s_{\bullet,\bar i}+s_{i, \bullet}}\delta_3 f^\bk$
have already the desired properties, because we have just seen that
$d(\bk')_j>0$ for some $j=1,\ldots, n-i$.

So to finish the proof of the lemma, it suffices to look at
\begin{gather}\label{whoknows}
f^\bx \pa_{1,i-1}^{s_{\bullet,\bar i}+s_{i, \bullet}} \delta_3 f_{1,\ol{i+1}}^{s_{\bullet,i}+s_{\bullet,\ol{i+1}}}
\cdots f_{1,\ol{n-1}}^{s_{\bullet,n-2}+s_{\bullet,\ol{n-1}}}
f_{1,n}^{s_{\bullet,n-1}+s_{\bullet,n}} f_{1,\bar 1}^{s_{\bullet,\bar i}} \\ =
\label{2.15}
f^\bx \pa_{1,i-1}^{s_{\bullet,\bar i}+s_{i, \bullet}}  f_{1,i}^{s_{\bullet,i}}f_{1,i+1}^{s_{\bullet,i+1}}
\cdots f_{1,n}^{s_{\bullet,n}}
f_{1,\ol{n-1}}^{s_{\bullet,\ol{n-1}}}\cdots
 f_{1,\ol{i+1}}^{s_{\bullet,\ol{i+1}}}f_{1,\bar 1}^{s_{\bullet,\bar i}}
\end{gather}
Note that the operator $\pa_{1,i-1}$ being applied to any variable in
\eqref{2.15} but to $f_{1,\bar 1}$, increases the
degree with respect to the variables $f_{i,*}$ or gives zero.
We note also that $\pa_{1,i-1} f_{1,\ol 1}=f_{1,\bar i}$. So $(\ref{whoknows})$
written as a linear combination $\sum c_\bk f^\bk$ of monomials
such that $d(\bk)_{j}=0$ for $j=1,\ldots,n-i$ and $d(\bk)_{n-i+1}\ge s_{i, \bullet}$.

It remains to consider the case where
$d(\bk)_{n-i+1}= s_{i, \bullet}$. This is only possible
if $\pa_{1,i-1}$ is applied
$s_{\bullet,\bar i}+s_{i,\bullet}$-times to $f_{1,1}^{s_{\bullet,\bar i}}$,
in which case $d(\bk)$ has only two non-zero entries: $d(\bk)_n=\Sigma-s_{i, \bullet}$
and $d(\bk)_{n-i+1}=s_{i, \bullet}$, so $d(\bk)=d(\bs')$. If $\bk\not=\bs'$,
then necessarily
$f_{i,i}^{t_{i,i}}f_{i,i+1}^{t_{i,i+1}}\cdots f_{i,\bar i}^{t_{i,\bar i}}<
f_{i,i}^{s_{i,i}}f_{i,i+1}^{s_{i,i+1}}\cdots f_{i,\bar i}^{s_{i,\bar i}}$.
\qed

\vskip 4pt\noindent
{\it Proof of the corollary.\/}
The operators used to compose $\Delta_2$ do not change anymore the entries
of $d(\bt)$ for the first $n-i+1$ indices.

Suppose first $\bt$ is such that there exists an index $j$ such that
$d(\bt)_j>0$ for some $j\in \{1,2,...,n-i\}$ or $d(\bt)_{n-i+1}>s_{i,\bullet}$.
By the description of the operators
occurring in $\Delta_2$, every monomial $f^\bk$ occurring with a nonzero
coefficient in $\Delta_2 f^\bt$ has this property too and hence $f^\bs \ord f^\bk $.

Next assume $d(\bt)=d(\bs')$ and
$f_{i,i}^{t_{i,i}}f_{i,i+1}^{t_{i,i+1}}\cdots f_{i,\bar i}^{t_{i,\bar i}}<
f_{i,i}^{s_{i,i}}f_{i,i+1}^{s_{i,i+1}}\cdots f_{i,\bar i}^{s_{i,\bar i}}$.
Recall that $\bt_{1,\ol{i-1}}=\ldots=\bt_{1,\ol{1}}=0$. It follows that the operators
occurring in $\Delta_2$ always only subtract from one of the entries in the top row
and add to the entry in the same column and a corresponding row (of index strictly
smaller than $i$). It follows
that all monomials $f^\bk$ occurring in $\Delta_2(f^\bt)$ have the property:
$d(\bk)=d(\bs)$. Since $f_{i,i}^{t_{i,i}}f_{i,i+1}^{t_{i,i+1}}\cdots f_{i,\bar i}^{t_{i,\bar i}}<
f_{i,i}^{s_{i,i}}f_{i,i+1}^{s_{i,i+1}}\cdots f_{i,\bar i}^{s_{i,\bar i}}$,
it follows that $f^\bs> f^\bk$ and hence $f^\bs\ord f^\bk$.
\qed
\vskip 4pt\noindent
{\it Continuation of the proof of Theorem~\ref{spanCn} ii).\/}
We have seen that to prove Theorem~\ref{spanCn} {\it ii)}, it suffices to prove
$(\ref{goalstraightening})$.
By Lemma~\ref{B} and Corollary~\ref{unimportantmonomial}, it remains to prove for
$f^{\bs'}$ that
$\Delta_2 f^{\bs'}$ is a linear combination of  $f^\bs$ with a non trivial coefficient
and monomials strictly smaller than $f^\bs$. The following lemma
proves this claim and hence finishes the proof of the theorem.
\qed

\vskip 4pt
The following lemma completes the proof of  part {\it ii)} of Theorem~\ref{spanCn}.
\begin{lem}\label{straightlemma}
The operator $\Delta_2:=\pa_{1,1}^{s_{2,\bullet}}\pa_{1,2}^{s_{3,\bullet}}\dots
\pa_{1,i-2}^{s_{i-1,\bullet}}$ applied to the monomial
$f^{\bs'}$ is a linear combination of $f^\bs$ and smaller
monomials:
\begin{equation}\label{BC}
\Delta_2 f^{\bs'} =c f^\bs +\sum_{\bs \ord \bt} c_\bt f^\bt, \quad\text{where $c\not=0$}.
\end{equation}
\end{lem}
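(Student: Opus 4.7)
The plan is to reduce the statement to the already-known type ${\tt A}$ version of the lemma from \cite{FFoL}. First, I would show that each operator $\pa_{1,k}$ appearing in $\Delta_2$ (for $1\le k\le i-2$) annihilates every factor $f_{i,*}$ of $f^{\bs'}$. Indeed, by Lemma~\ref{pabeal} the only roots $\al$ on which $\pa_{1,k}$ acts non-trivially are those beginning with $\al_1$, and none of $\al_{i,q_{i-1}},\dots,\al_{i,\bar i}$ satisfy this for $i\ge 2$. Writing $F=f_{1,1}^{s_{\bullet,1}}f_{1,2}^{s_{\bullet,2}}\cdots f_{1,q_{i-1}}^{s_{\bullet,q_{i-1}}-s_{i,q_{i-1}}}$ for the first-row part of $f^{\bs'}$, the Leibniz rule then gives
$$
\Delta_2 f^{\bs'} = \bigl(\Delta_2 F\bigr)\cdot f_{i,q_{i-1}}^{s_{i,q_{i-1}}}\cdots f_{i,\bar i}^{s_{i,\bar i}}.
$$

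Next I would verify that $\pa_{1,k}$ acts on each factor of $F$ by the ``type ${\tt A}$ rule'' $f_{1,j}\mapsto f_{k+1,j}$. For an unbarred column $j$ this is immediate from \eqref{sp1} since $j>k$. For a barred column $\bar \ell$ occurring in $F$, observe that $\bar\ell\le q_{i-1}<q_i=\bar i$ forces $\ell>i>k$, so the first formula in \eqref{sp2} applies and yields $\pa_{1,k}f_{1,\bar\ell}=f_{k+1,\bar\ell}$; the other rules in \eqref{sp2}--\eqref{sp3} do not activate. Thus, when restricted to $F$, the combinatorics of $\Delta_2$ coincides exactly with the situation analysed in \cite{FFoL} in type~${\tt A}$: each $\pa_{1,k}$ shifts one column entry from row $1$ to row $k+1$ while preserving the column index. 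Invoking the type ${\tt A}$ analogue of the present lemma, we obtain
$$
\Delta_2 F = c\,F' + \sum_{F'>G} c_G\, G,\qquad c\ne 0,
$$
where $F'=\prod_{j=1}^{i-1}\prod_k f_{j,k}^{s_{j,k}}$ realises the distribution prescribed by $\bs$ in rows $1,\dots,i-1$, and ``$>$'' denotes the homogeneous lex ordering. Multiplying by the inert row-$i$ factors gives $\Delta_2 f^{\bs'}=c\,f^\bs+\sum c_\bt f^\bt$ with $f^\bs>f^\bt$.

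The main obstacle — and the concluding step — is to upgrade ``$>$'' to ``$\ord$''. For this I would argue that every monomial $f^\bt$ appearing in $\Delta_2 f^{\bs'}$ satisfies $d(\bt)=d(\bs)$. Indeed, $\Delta_2$ affects only rows $1,\dots,i-1$ (rows $i,\dots,n$ being untouched), and each application of $\pa_{1,k}$ removes one unit from row~$1$ and deposits it in row $k+1$. Since the total power of $\pa_{1,k}$ applied is the fixed scalar $s_{k+1,\bullet}$, every term $f^\bt$ has row-sums $(s_{1,\bullet},\dots,s_{i-1,\bullet},s_{i,\bullet},0,\dots,0)$, which is precisely $d(\bs)$ read backwards. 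Having the same total degree and the same $d$-vector, case (c) of Definition~\ref{sp<} converts $f^\bs>f^\bt$ into $f^\bs\ord f^\bt$, yielding \eqref{BC}. The delicate point throughout is the careful bookkeeping that no stray term $f^\bt$ acquires a non-zero $d(\bt)_j$ for $j\le n-i$ — this is guaranteed by the first reduction step, which confines the entire action to the top $i-1$ rows.
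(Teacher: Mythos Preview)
Your argument is correct and shares the same skeleton as the paper's: both first observe that the row-$i$ factors of $f^{\bs'}$ are inert under every $\pa_{1,k}$ occurring in $\Delta_2$, and that every surviving monomial $f^\bt$ has $d(\bt)=d(\bs)$, so that ``$\ord$'' may be replaced by ``$>$''. The difference lies in how the leading-term claim for ``$>$'' is then established. The paper carries out a direct induction on the operators $\pa_{1,i-2},\pa_{1,i-3},\ldots,\pa_{1,1}$, identifying at each stage the unique maximal monomial in the partial result and arguing that applying $\pa_{1,k}$ to any variable other than $f_{1,q_{k}},\ldots,f_{1,q_{k+1}}$ produces something strictly smaller. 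You instead note that on the variables appearing in $F$ every $\pa_{1,k}$ acts by the column-preserving rule $f_{1,q}\mapsto f_{k+1,q}$, and then hand the entire computation off to the type~${\tt A}$ lemma from \cite{FFoL}. Your route is shorter; the paper's is self-contained.

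Two small caveats on your reduction. First, the statement in \cite{FFoL} is formulated for an $\mathfrak{sl}_N$-root system, whereas your column set lies in $J$ and may contain barred indices; so what you are really invoking is that the \emph{proof} in \cite{FFoL} depends only on the combinatorial rule $f_{1,q}\mapsto f_{k+1,q}$ together with the compatibility of the monomial order, and that argument transplants verbatim after relabelling the columns. You should say this explicitly rather than citing the statement. Second, the inequality $q_{i-1}<q_i=\bar i$ need not be strict (one can have $q_{i-1}=\bar i$), so the correct conclusion is $\ell\ge i>k$ rather than $\ell>i$; since $k\le i-2$ this does not affect the rest of the argument.
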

\begin{proof}
First note that all monomials $f^\bk$ occurring in
$\Delta_2 f^{\bs'}$ have the same total degree.
Recall that $\bs'_{1,\ol{i-1}}=\ldots=\bs'_{1,\ol{1}}=0$. It follows that the operators
occurring in $\Delta_2$ always only subtract from one of the entries in the top row
and add to the entry in the same column and a corresponding row (of index strictly
smaller than $i$ and strictly greater than $1$). Thus all monomials $f^\bk$ occurring
in $\Delta_2(f^{\bs'})$ have the same multi-degree.
In fact, we will see below that $f^\bs$ is a summand and hence $d(\bk)=d(\bs)$.

So in the following we can replace the ordering $\ord$ by $>$ since, in this special case,
the latter implies the first.

The elements $f_{i,j}$ and $f_{i,\bar{j}}$, $2\le i\le j \le n$,
are in the kernel of the operators $\pa_{1,k}$ for all $1\le k\le n$, and
so are the variables $f_{1,j}$, $j\le k$ in the first $k$ columns.

The operator $\pa_{1,k}$, $1\le k\le n$, ``moves" the variables $f_{1,j}$, $k+1\le j\le n$ from the first row to the variable $f_{k+1,j}$ in the
same column.

The operator $\pa_{1,k}$, $1\le k\le n$ ``moves" the variables $f_{1,\bar{j}}$, $k+1\le  j\le n$ from the
first row to the variable $f_{k+1,\bar{j}}$ in the
same column. For $j\le k$, the operator makes the variables switch also the column,
it moves the variable $f_{1,\bar{j}}$ to the variable $f_{j, \overline{k+1}}$ in the $j$-th row and $(\overline{k+1})$-th column.

If $i=1,2$, then $\Delta_2$ is the identity operator,
$f^\bs=f^{\bs'}$ and hence the lemma is trivially true.
Now assume $i\ge 3$.
We note that the monomial
\begin{gather*}
f_{1,1}^{s_{1,1}}\dots f_{1,q_1}^{s_{1,q_1}}\cdot
(\pa_{1,1}^{s_{2,q_1}}f_{1,q_1}^{s_{2,q_1}}\dots
\pa_{1,1}^{s_{2,q_2}}f_{1,q_2}^{s_{2,q_2}})
\cdot \ldots \hskip 80pt \\
\ldots \cdot(\pa_{1,i-2}^{s_{i -1,q_{i-2}}}f_{1,q_{i-2}}^{s_{i-1,q_{i-2}}}\dots
\pa_{1,i-2}^{s_{i-1,q_{i-1}}}f_{1,q_{i-1}}^{s_{i-1,q_{i-1}}})
(f_{i,q_{i-1}}^{s_{i,q_{i-1}}}
\ldots f_{i,\bar i}^{s_{i,\bar i}})
\end{gather*}
is proportional to $f^\bs$ and appears as a summand in $\Delta_2 f^{\bs'}$.
Our goal is to show that all other monomials in $\Delta_2 f^{\bs'}$ are less than $f^\bs$.

All monomials share the common factor $(f_{i,q_{i-1}}^{s_{i,q_{i-1}}}
\ldots f_{i,\bar i}^{s_{i,\bar i}})$. The maximal variable smaller that the ones occurring
in this factor is the variable $f_{i-1,q_{i-1}}$.
Note that if $j<i-1$ then for any $q\in J$ the variable $\pa_{1,j}f_{1,q}$
lies in the $(j+1)$-th row and  $j+1<i$. The operator $\pa_{1,i-2}$
is applied $s_{i-1,\bullet}$ times and the unique maximal monomial
in the sum expression of $\pa_{1,i-2}^{s_{i-1,\bullet}}f^{\bs'}$ is
$$
f_{1,1}^{s_{\bullet,1}}f_{1,2}^{s_{\bullet,2}} \ldots
f_{1,q_{i-2}}^{s_{\bullet,q_{i-2}}- s_{i-1,q_{i-2}}}
(f_{i-1,q_{i-2}}^{s_{i-1,q_{i-2}}} \ldots f_{i-1,q_{i-1}}^{s_{i-1,q_{i-1}}})
(f_{i,q_{i-1}}^{s_{i,q_{i-1}}} \ldots f_{i,\bar i}^{s_{i,\bar i}}).
$$
In fact, applying the operator $\pa_{1,i-2}$ to any of the variables $f_{1,j}$
such that $j\not=q_{i-2},\ldots, q_{i-1}$, one gets a monomial smaller in the order $>$.
The exponents $s_{i-1,j}$, $j=q_{i-2},\ldots,q_{i-1}$, are the maximal powers
such that $\pa_{1,i-2}$ can be applied to $f_{1,j}^{y}$ because either
$q_{i-2}<j<q_{i-1}$, and then $y=s_{\bullet,j}=s_{i-1,j}$, or $j=q_{i-1}$,
then  $s_{i-1,q_{i-1}}$ is the power with which the variable occurs in $f^{\bs'}$,
or $j=q_{i-2}$, then only the power $s_{i-1,q_{i-2}}$ of the operator is left.

Repeating the arguments for the operators  $\pa_{1,i-3}$ etc. we complete the proof
of the lemma.
\end{proof}

\section{Main Theorem}
Recall that in \cite{ABS} the equality $\# S(\la)=\dim V(\la)$
is proved using purely combinatorial tools. Combining this
result with Theorem  \ref{spanCn} we obtain Theorems $A$ and $B$ from the introduction.
However in this section we present  a representation theoretical proof of the equality
$\# S(\la)=\dim V(\la)$ by showing that the vectors $f^\bs$, $\bs\in S(\la)$, are
linearly independent in $gr V(\la)$. The advantage of our proof is that in the course
of the proof we obtain the following important statement: the subspace of
$gr V(\la)\T gr V(\mu)$  generated from the product of highest weight vectors is
isomorphic to $gr V(\la+\mu)$ (see Theorem \ref{maintheorem}, $(iii)$).

\subsection{Fundamental weights and minimal sets}
In this subsection we study the case $\la=\om_i$.
The following lemma follows from the definition of $S(\om_i)$.
\begin{lem}\label{minimalset}
$S(\om_i)$ consists of  all $\bs$ such that $s_\al\le 1$
and the support of $\bs$
is given by the set
$$M^{\bs} = \{  \alpha_{j_l,\ol{k_l}} \; | \; l=1, \ldots , p \}
\cup \{ \alpha_{t_l,r_l} \; | \; l = 1, \ldots, q \}$$
with the following conditions
\begin{gather*}
1 \leq j_1 < j_2 < \ldots < j_p \leq i \; ; \; 1 \leq  k_1 < k_2 < \ldots < k_p,\\
j_p < t_1 < t_2 \ldots < t_q \leq i \leq r_1 < \ldots < r_q \leq n.
\end{gather*}
\end{lem}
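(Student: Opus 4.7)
The plan is to read off $S(\omega_i)$ directly from the polytope inequalities. Since $m_j=\delta_{j,i}$ for $\lambda=\omega_i$, every Dyck path inequality defining $P(\omega_i)$ has right-hand side equal to $0$ or $1$: a path from $\alpha_a$ to $\alpha_b$ (respectively to $\alpha_{\bar b}$) has right-hand side $1$ iff $a\le i\le b$ (resp.\ $a\le i$), and $0$ otherwise. In particular every coordinate $s_\alpha$ of $\bs\in S(\omega_i)$ lies in $\{0,1\}$, and the content of the lemma is to pin down the possible supports $M^{\bs}$.

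I would proceed in two main steps. Firstly, for each root $\alpha$ that is NOT of the form $\alpha_{j,k}$ with $j\le i\le k$ or $\alpha_{j,\bar k}$ with $j\le i$, I would exhibit an explicit Dyck path through $\alpha$ with right-hand side $0$, thereby forcing $s_\alpha=0$; the remaining roots are precisely the ones appearing in the description of $M^{\bs}$. Secondly, I would show that $M^{\bs}$ must be an antichain with respect to the right-and-down partial order on the triangular array of positive roots. Indeed, if two compatible support roots $\alpha,\beta$ were comparable, a sub-path from $\alpha$ to $\beta$ extends to a full Dyck path starting from a simple root and ending at an admissible endpoint whose right-hand side is $1$; then $s_\alpha+s_\beta\le 1$ would contradict $s_\alpha=s_\beta=1$.

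It remains to translate the antichain condition into the explicit indexing of the statement. Sorting the support by row splits it into an overline block (rows $j_1<\cdots<j_p\le i$) and a non-overline block (rows $t_1<\cdots<t_q\le i$); the antichain condition inside each block forces monotonicity of the column index in the $J$-order, which, after converting back to the natural-number indices $k_l$ and $r_l$, reads as the stated inequalities. Incomparability across the two blocks requires every overline row to be strictly smaller than every non-overline row, since on any Dyck path the ``bar'' columns are visited only after the non-bar columns, so the reverse row order would produce comparable pairs; this yields $j_p<t_1$. Conversely, any set of the form described in the lemma is by construction an antichain, so every Dyck path meets it in at most one root and all polytope inequalities are automatically satisfied. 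The only point requiring care is the conversion between the natural-number indices and their positions in the $J$-ordering of columns, since the bar portion of $J$ is ordered opposite to the natural order; once this is set up, the combinatorics is elementary.
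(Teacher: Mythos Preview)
Your approach is exactly what the paper intends: it merely asserts that the lemma ``follows from the definition of $S(\omega_i)$'', and your antichain analysis is the natural way to unpack that. The argument that the support must lie in $\{\alpha_{j,k}: j\le i\le k\}\cup\{\alpha_{j,\bar k}: j\le i\}$ and must form an antichain for the right/down reachability order on the triangle of positive roots is correct, as is the derivation of the cross-block condition $j_p<t_1$.

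There is one point where your final translation step is too quick, and it actually exposes a slip in the printed statement. For the non-bar block, with rows $t_1<\cdots<t_q$, incomparability forces the column indices to be \emph{decreasing} in the $J$-order; since these columns lie in $\{i,\ldots,n\}$, where the $J$-order agrees with the natural order, one obtains $r_1>r_2>\cdots>r_q$, not the $r_1<\cdots<r_q$ written in the lemma. Concretely, for $\omega_2$ in $\msp_6$ the support $\{\alpha_{1,2},\alpha_{2,3}\}$ satisfies the printed inequalities but violates the Dyck path $(\alpha_1,\alpha_{1,2},\alpha_{2,2},\alpha_{2,3},\alpha_3)$, whose right-hand side equals $1$. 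For the bar block the conversion does go the stated way, since the $J$-order on $\ol{n-1},\ldots,\ol 1$ reverses the natural order on the $k_l$. The downstream uses of the lemma in the paper (the bound $\sharp M^{\bs}\le i$ and the fact that every Dyck path meets $M^{\bs}$ at most once, Remark~\ref{one}) depend only on the antichain property, which your argument does establish; so your method is sound, but you should flag rather than reproduce the inconsistency in the $r_l$ ordering.
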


\begin{rem}\label{one}
We note that $\sharp M^{\bs} \leq i$ and
every path contains at most one element from $M^{\bs}$,
since the roots on a path are ordered with respect to the order $>$.
\end{rem}

\begin{lem}
For every fundamental weight $\om_i$ we have
$$\sharp S(\om_i) = \dim V(\om_i)$$
\end{lem}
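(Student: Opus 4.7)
The plan is to match $\sharp S(\omega_i)$ with the classical dimension $\dim V(\omega_i) = \binom{2n}{i} - \binom{2n}{i-2}$ by constructing an explicit weight-preserving bijection between $S(\omega_i)$ and a classical combinatorial indexing set for a basis of $V(\omega_i)$, namely the symplectically admissible columns of height $i$ over the alphabet $\widetilde J = \{1, \ldots, n, \ol n, \ldots, \ol 1\}$.

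First I would use the preceding lemma to parametrize $S(\omega_i)$. Imposing the convention $k_l \leq n-1$ resolves the notational overlap $\alpha_{j,n} = \alpha_{j,\ol n}$, so each $\bs \in S(\omega_i)$ has a unique support decomposition $\{\alpha_{j_l, \ol{k_l}}\}_{l=1}^p \cup \{\alpha_{t_l, r_l}\}_{l=1}^q$ subject to the strict inequalities in the lemma. Then I define
\[
\Phi(\bs) = \bigl(\{1, \ldots, i\} \setminus \{j_1, \ldots, j_p, t_1, \ldots, t_q\}\bigr) \cup \{\ol{k_1}, \ldots, \ol{k_p}\} \cup \{r_1 + 1, \ldots, r_q + 1\},
\]
where $r_l + 1$ denotes the successor of $r_l$ in $\widetilde J$ (so that $n+1 = \ol n$). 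Intuitively, each barred root $\alpha_{j_l, \ol{k_l}}$ swaps the entry $j_l$ of the highest-weight column $\{1, \ldots, i\}$ with its partner $\ol{k_l}$, and each root $\alpha_{t_l, r_l}$ slides $t_l$ upward to $r_l + 1$. The strict monotonicity of the sequences $(j_l), (k_l), (t_l), (r_l)$ together with the separation $j_p < t_1$ force the three pieces to be pairwise disjoint, so $|\Phi(\bs)| = i$; moreover a direct computation shows $\Phi$ is weight-preserving for the natural assignment $k \mapsto \epsilon_k$, $\ol k \mapsto -\epsilon_k$ on $\widetilde J$.

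Next I would establish bijectivity by explicit inversion: given an $i$-subset $A \subseteq \widetilde J$, partition it as $A \cap \{1, \ldots, i\}$, $A \cap \{i+1, \ldots, n, \ol n\}$, and $A \cap \{\ol 1, \ldots, \ol{n-1}\}$, list the missing positions of $\{1, \ldots, i\}$ in increasing order, and match the first $|A \cap \{\ol 1, \ldots, \ol{n-1}\}|$ of them with the barred entries of $A$ to recover the pairs $(j_l, \ol{k_l})$, and the remaining missing positions with the raised entries to recover $(t_l, r_l)$. The main obstacle is verifying that the image of $\Phi$ coincides exactly with the classical admissible columns---equivalently, the $i$-subsets of $\widetilde J$ not lying in the image of the symplectic contraction $\Lambda^{i-2} V(\omega_1) \hookrightarrow \Lambda^{i} V(\omega_1)$---so that $|\operatorname{im}\Phi| = \binom{2n}{i} - \binom{2n}{i-2}$. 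The key combinatorial input is that the condition $j_l \leq k_l$ needed for $\alpha_{j_l, \ol{k_l}}$ to be a positive root translates precisely into the admissibility condition that rules out ``inner'' paired entries $\{k, \ol k\}$ in the column; once this compatibility is checked, the desired equality $\sharp S(\omega_i) = \dim V(\omega_i)$ follows.
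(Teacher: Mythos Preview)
Your approach is essentially one of the three routes the paper itself sketches (its proof reads in full: ``Follows from \cite{ABS} or by establishing a bijection with Kashiwara--Nakashima tableaux or by showing that $\sharp S(\om_i)+\sharp S(\om_{i-2})+\dots=\binom{2n}{i}$''). You are carrying out the middle option explicitly, and your map $\Phi$ is the natural one; the verification that $\operatorname{im}\Phi$ equals the De~Concini admissible columns does go through along the lines you indicate. Concretely, for $a\le i$ the admissibility inequality $|A\cap([1,a]\cup[\ol a,\ol 1])|\le a$ rewrites as $|\{l:k_l\le a\}|\le |B\cap[1,a]|$ where $B=\{1,\dots,i\}\setminus A$, and since the $j_l$ are by construction the $p$ smallest elements of $B$, this is equivalent to $j_l\le k_l$ for all $l$; for $a>i$ the inequality is automatic because $|A|=i$. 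So the ``main obstacle'' you flag is genuinely routine once set up this way.

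The only point to watch is that your inverse is well-posed: one needs $|B|\ge p$, which follows from $|B|=|A\cap\{i{+}1,\dots,n,\ol n\}|+|A\cap\{\ol{n{-}1},\dots,\ol 1\}|=q+p$. Compared with the paper's third suggestion, your argument gives more, namely an explicit weight-preserving bijection, whereas the identity $\sum_j \sharp S(\om_{i-2j})=\binom{2n}{i}$ together with $\Lambda^i V(\om_1)\simeq\bigoplus_j V(\om_{i-2j})$ is arguably quicker if one only wants the cardinality equality and is willing to invoke the representation-theoretic decomposition.
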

\begin{proof}
Follows from \cite{ABS} or by establishing a bijection with
Kashiwara-Nakashima tableaux or by showing that
\[
\sharp S(\om_i)+ \sharp S(\om_{i-2})+\dots =\binom{2n}{i}
\]
(compare with $\Lambda^i V(\om_1)=V(\om_i)\oplus V(\om_{i-2})\oplus\dots$, see \cite{FH}).
\end{proof}

We set
$$R_i = \{ \beta \in R^+ \; | \; (\om_i, \beta) \neq 0 \}.$$
Let $\lambda = \sum m_j \om_j \in P^+$ and $\bs \in S(\lam)$. We set
$$R_i^{\bs} = \{ \beta \in R_i \; | \; s_{\beta} \neq 0 \}.$$
From now on let $i$ be the minimal index, s.t. $m_i \neq 0$.

\begin{dfn}\label{definitionmi}
For $\bs \in S(\lam)$ denote by $M^{\bs}_i$ the set of minimal elements in $R_i^{\bs}$
with respect to the order $>$ (see \eqref{order}). Denote by $\bm_i^{\bs}$ the
tuple $m_{\beta} = 1$ if $\beta \in M_i^{\bs}$ and $0$ otherwise.
\end{dfn}

\begin{lem}
Let $\lam = \sum m_i \om_i$ and $i$ minimal with $m_i \neq 0$. 
If $\bs \in S(\lambda)$ then $\bm^{\bs}_i \in S(\om_i)$ and $\bs - \bm^{\bs}_i \in S(\lam - \om_i)$.
\end{lem}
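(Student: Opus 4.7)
The plan is to verify the Dyck path inequalities defining $P(\om_i)$ for $\bm_i^\bs$ and those defining $P(\la - \om_i)$ for $\bs - \bm_i^\bs$. For the first claim, write a Dyck path as $\bp=(p(0)=\al_a,\ldots,p(k))$. The bound in $S(\om_i)$ is $1$ when $\bp$ is \emph{crossing}, meaning $a \le i$ and $p(k)\in\{\al_l,\al_{\ol{l}}\}$ with $l\ge i$ (in the simple case), and $0$ otherwise. Non-crossing paths lie entirely outside $R_i$, so they miss $M_i^\bs$ trivially. For crossing $\bp$, I use that $M_i^\bs$ is an antichain in the partial order $\al\preceq\beta \iff r_\al\le r_\beta$ and $q_\al\le q_\beta$ in $J$, whereas $\bp$ is a chain; hence $|\bp\cap M_i^\bs|\le 1$, matching the bound.

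For $\bs-\bm_i^\bs\in S(\la-\om_i)$, the $(\la-\om_i)$-bound on $\bp$ differs from the $\la$-bound by the indicator of ``$\bp$ crossing row $i$''. So only tight crossing paths (those where the $\bs$-sum on $\bp$ equals the $\la$-bound) present a non-trivial inequality, and there I must show $|\bp\cap M_i^\bs|\ge 1$. The core step is therefore: every tight crossing Dyck path meets $M_i^\bs$. I split this into two sub-steps.

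In sub-step (a) I show $\bp\cap R_i^\bs\neq\emptyset$. A crossing path decomposes into its pre-$R_i$ prefix (positions with column $<i$ in $J$), its $R_i$-segment, and its post-$R_i$ suffix (positions with row $>i$). The pre-$R_i$ prefix, extended to $\al_{i-1}$ by row increments, becomes a Dyck path with $\la$-bound $m_a+\dots+m_{i-1}=0$ (by the minimality of $i$, $m_1=\dots=m_{i-1}=0$); so the pre-$R_i$ part carries no $\bs$-support. The post-$R_i$ suffix, prepended by column increments from $\al_{i+1}$, becomes a Dyck path whose $\la$-bound is smaller than $\bp$'s by exactly $m_i\ge 1$. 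If $R_i^\bs\cap\bp$ were empty, the total $\bs$-sum on $\bp$ would be bounded by this strictly smaller value, contradicting tightness.

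In sub-step (b) I let $\gamma$ be the earliest element of $R_i^\bs\cap\bp$ along $\bp$ and claim $\gamma\in M_i^\bs$. Suppose instead that some $\delta\in R_i^\bs$ satisfies $\delta\prec\gamma$. Then $\delta\notin\bp$ (otherwise, $\bp$ being a chain, $\delta$ would precede $\gamma$ on $\bp$, contradicting the choice of $\gamma$). Build a detour Dyck path $\bp'$ starting at $\al_{r'}$ with $r'=\min(a,r_\delta)$, routing through $\delta$ then $\gamma$, and coinciding with $\bp$ from $\gamma$ onward; by the minimality of $i$, the $\la$-bound on $\bp'$ equals that on $\bp$. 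Re-applying the pre-$R_i$ vanishing to $\bp'$, the $\bs$-sum on $\bp'$ up to $\gamma$ is at least $s_\delta+s_\gamma$, whereas the analogous quantity on $\bp$ equals only $s_\gamma$; adding the common tail yields $\bs$-sum on $\bp'$ strictly exceeding its $\la$-bound, contradicting $\bs\in S(\la)$. This last sub-step is the main obstacle: the detour construction must simultaneously preserve the Dyck path bound and exploit pre-$R_i$ vanishing, both of which rest essentially on $m_1=\dots=m_{i-1}=0$.
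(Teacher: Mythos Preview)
Your argument is correct and follows essentially the same route as the paper's. Both proofs reduce the second claim to a detour construction: given a crossing path $\bp$ that misses $M_i^\bs$, one reroutes through an extra element of $R_i^\bs$ to produce a path $\bp'$ with the same $\la$-bound (thanks to $m_1=\cdots=m_{i-1}=0$) but strictly larger $\bs$-sum, forcing $\bp$ to be non-tight. You phrase this contrapositively (tight $\Rightarrow$ $\bp\cap M_i^\bs\neq\emptyset$) and are more explicit than the paper in two places: your sub-step~(a) spells out why the first element of $\bp$ carrying $\bs$-support must lie in $R_i$ (the paper's sentence ``then there exists $\alpha\in M_i^\bs$ such that $\alpha<p(l)$'' tacitly uses this), and you make the partial order $\preceq$ explicit, which clarifies why $M_i^\bs$ is an antichain and why the detour $\bp'$ through $\delta$ and $\gamma$ is a legitimate Dyck path.
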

\begin{proof}
We first note that the statement $\bm^{\bs}_i \in S(\om_i)$ follows from
Remark \ref{one}. Let us prove that $\bs - \bm^{\bs}_i \in S(\lam - \om_i)$.
For this we need to show that the conditions from \eqref{polytopeequation} for
$\bs - \bm^{\bs}_i$ are satisfied for all paths $\bp$.
Let $\bp=(p(0),\dots,p(k))$. Let $p(0)=\al_a$. Then we know that
$\sum_{l=0}^k s_{p(l)}\le m_a+\dots +m_b$, where $p(k)=\al_b$ if $b<n$ and
$p(k)=\al_{j,\ol j}$ if $b=n$ ($j\ge a$). The cases $b<i$ or $a>i$ are trivial.
So we assume $a\le i$ and $b\ge i$.
If $M_i^\bs \cap \bp\ne\emptyset$, then
\[
\sum_{l=0}^k (s-\bm_i^\bs)_{p(l)}\le m_a+\dots +m_b-1.
\]
Now assume that $M_i^\bs \cap \bp=\emptyset$. Let $l$ be the minimal number such that
$s_{p(l)}>0$. Then there exists $\al\in M_i^\bs$ such that $\al<p(l)$.
Therefore there exists a path $\bp'$ containing $\al$, $p(l), \dots, p(k)$.
We note that
\[
m_i+\dots +m_b\ge \sum_{l\ge 0} s_{p'(l)}> \sum_{l\ge 0} s_{p(l)}.
\]
Therefore, $\sum_{l\ge 0} (s-\bm_i^\bs)_{p(l)}\le m_i+\dots +m_b$.
\end{proof}

\subsection{Proof of the main theorem}
In the following we write $V^a(\la)$ for the associated graded module $gr V(\la)$.
Denote by $V^a(\la,\mu)\hk V^a(\la)\T V^a(\mu)$ the $S(\fn^-)$-submodule
generated by the tensor product $v_\la\T v_\mu$ of the highest weight vectors.

\begin{thm}\label{maintheorem}
\begin{itemize}
\item[i)] The vectors $f^\bs v_\la$, $\bs\in S(\la)$ form a basis of  $V^a(\la)$,
\item[ii)] Let $V^a(\la)=S(\fn^-)/I(\la)$. Then
$I(\la)=S(\fn^-)(\U(\n)\circ R)$, where
$$
R=\mathrm{span}\{ f_{\al_{i,j}}^{m_i+\dots +m_j+1}, 1\le i\le j\le n-1,\
f_{\alpha_{i, \ol{i}}}^{m_i+\dots + m_n+1}, 1\le i\le n\}.
$$
\item[iii)] The $S(\fn^-)$ modules $V^a(\la,\mu)$ and $V^a(\la+\mu)$ are
isomorphic.
\end{itemize}
\end{thm}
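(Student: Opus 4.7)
The plan is to establish parts (i), (ii), (iii) simultaneously by induction on $|\la|=\sum_j m_j$. By the spanning assertion of Theorem~\ref{spanCn} we already know $\dim V^a(\la)\le \sharp S(\la)$, so part (i) reduces to producing $\sharp S(\la)$ linearly independent vectors in $V^a(\la)$. The base case $\la=\om_i$ follows directly: the spanning set from Theorem~\ref{spanCn} has size $\sharp S(\om_i)$, and the identity $\sharp S(\om_i)=\dim V(\om_i)$ established in Section~\ref{spanningsection} forces these to be a basis. Parts (ii) and (iii) at fundamental weights then follow by dimension comparison.

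For the inductive step, let $i$ be the smallest index with $m_i\neq 0$ and set $\mu=\la-\om_i$. The classical $\msp_{2n}$-module embedding $V(\la)\hk V(\om_i)\T V(\mu)$, $v_\la\mapsto v_{\om_i}\T v_\mu$, respects the PBW filtration (using the convolution filtration on the tensor product) and therefore induces a map of graded $S(\fn^-)$-modules
\begin{equation*}
\pi\colon V^a(\la)\twoheadrightarrow V^a(\om_i,\mu)\hookrightarrow V^a(\om_i)\T V^a(\mu),
\end{equation*}
surjective onto the cyclic submodule generated by $v_{\om_i}\T v_\mu$. The key step is to show that the vectors $\pi(f^\bs v_\la)=f^\bs(v_{\om_i}\T v_\mu)$, $\bs\in S(\la)$, are linearly independent in $V^a(\om_i)\T V^a(\mu)$. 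Using the cocommutativity of the coproduct on $S(\fn^-)$ one expands
\begin{equation*}
f^\bs(v_{\om_i}\T v_\mu)=\sum_{\br+\bt=\bs}\binom{\bs}{\br}\,f^\br v_{\om_i}\T f^\bt v_\mu,
\end{equation*}
and, by the induction hypothesis applied to $\om_i$ and $\mu$, each summand may be rewritten in the tensor basis $\{f^{\br'} v_{\om_i}\T f^{\bt'}v_\mu : \br'\in S(\om_i),\,\bt'\in S(\mu)\}$ by applying the straightening laws of Section~\ref{spanningsection} inside each tensor factor.

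The distinguished summand is furnished by the lemma preceding Theorem~\ref{maintheorem}: for every $\bs\in S(\la)$ the pair $(\bm_i^\bs,\bs-\bm_i^\bs)$ already lies in $S(\om_i)\times S(\mu)$, and the assignment $\bs\mapsto(\bm_i^\bs,\bs-\bm_i^\bs)$ is injective since $\bm_i^\bs$ is read off from the support of $\bs$ via Definition~\ref{definitionmi}. The proof is then completed once the following triangularity is verified: there exists a product order on $S(\om_i)\times S(\mu)$, built from the order $\ord$ of Definition~\ref{sp<} on each factor, for which the coefficient of $f^{\bm_i^\bs}v_{\om_i}\T f^{\bs-\bm_i^\bs}v_\mu$ in the expansion of $f^\bs(v_{\om_i}\T v_\mu)$ is nonzero, and every other pair $(\br',\bt')$ appearing with nonzero coefficient is strictly smaller. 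This is the main obstacle: one must simultaneously control the direct summands $(\br,\bt)$ with $\br\notin S(\om_i)$ or $\bt\notin S(\mu)$ (which straighten to $\ord$-smaller entries in the corresponding factor by Theorem~\ref{spanCn}) and the direct summands already in $S(\om_i)\times S(\mu)$ distinct from $(\bm_i^\bs,\bs-\bm_i^\bs)$; the latter is handled by exploiting the minimality of $\bm_i^\bs$ in $R_i^\bs$ together with the support constraints of Lemma~\ref{minimalset}.

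Once the triangularity is in place, $\pi$ is injective, hence an isomorphism, and combined with the spanning of Theorem~\ref{spanCn} this yields (i) together with the instance $V^a(\la)\simeq V^a(\om_i,\mu)$ of (iii). Assertion (ii) then follows at once, since the surjection $S(\fn^-)/I(\la)\twoheadrightarrow V^a(\la)$ of Theorem~\ref{spanCn} sends the spanning set $\{f^\bs v_\la : \bs\in S(\la)\}$ to a basis and must therefore be an isomorphism. The general form of (iii), namely $V^a(\la,\mu)\simeq V^a(\la+\mu)$ for arbitrary dominant $\la,\mu$, is finally obtained by iteration: decompose $\la+\mu$ as a sum of fundamental weights and apply the special case repeatedly, each step peeling off one factor $\om_i$.
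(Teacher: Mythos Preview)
Your overall strategy---induction on $|\la|$, peeling off $\om_i$ for the smallest $i$ with $m_i\ne 0$, embedding into $V^a(\om_i)\otimes V^a(\la-\om_i)$, and isolating the distinguished decomposition $\bs=\bm_i^\bs+(\bs-\bm_i^\bs)$---is exactly what the paper does, as is the derivation of (ii) and (iii) from (i).

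The gap is in your formulation of the triangularity. You assert that $(\bm_i^\bs,\bs-\bm_i^\bs)$ is the \emph{leading term} of $f^\bs(v_{\om_i}\otimes v_\mu)$ in some product order on $S(\om_i)\times S(\mu)$ built from $\ord$. No such order exists. Already for $\msp_4$, $\la=2\om_1$: when $\bs$ is supported on $\{\al_1,\al_{1,2}\}$ with both entries equal to $1$, the target $(\bm_i^\bs,\bs-\bm_i^\bs)$ and its swap both occur as basis pairs, and lexicographic comparison in the first coordinate makes the swap larger; when $\bs$ is supported only on $\al_1$, the pairs $(\bm_i^\bs,0)$ and $(0,\bs)$ both occur, and now lexicographic comparison in the second coordinate makes the latter larger. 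So neither lex direction works uniformly, and the componentwise order leaves these pairs incomparable.

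What the paper proves, and what actually suffices, is a triangularity indexed by $S(\la)$ rather than by pairs. Assume $\sum_{\bs}c_\bs f^\bs(v_{\om_i}\otimes v_\mu)=0$, choose $\bs$ maximal for $\ord$ among those with $c_\bs\ne 0$, and examine the coefficient of the single basis vector $f^{\bm_i^\bs}v_{\om_i}\otimes f^{\bs-\bm_i^\bs}v_\mu$ in the entire sum. The key fact is that $\ord$ is a \emph{monomial} order: if $(\bfa,\bfb)$ is any basis pair obtained by straightening a summand $f^{\br_1}v_{\om_i}\otimes f^{\br_2}v_\mu$, then either $\bfa+\bfb=\br_1+\br_2$ or $\br_1+\br_2\ord\bfa+\bfb$, with the strict alternative whenever at least one factor required straightening. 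Hence every $f^\bt$ with $\bs\ord\bt$ contributes only pairs with $\bs\ord\bfa+\bfb$, never the target; and the remaining decompositions of $\bs$ itself either already lie in $S(\om_i)\times S(\mu)$ with one component visibly different from that of the target, or straighten strictly and again miss it. The ``minimality of $\bm_i^\bs$'' you invoke plays no role in this step; it is used only in the preceding lemma to guarantee $\bs-\bm_i^\bs\in S(\la-\om_i)$.
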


The proof of the theorem is by an inductive procedure. We know that part i)
of the theorem holds for all fundamental weights. For a dominant weight $\lam=\sum_i a_i\om_i$
denote by $\vert \la\vert=\sum a_i$ the sum of the coefficients. A first step in the proof
is the following proposition:

Let $\la$ be a dominant weight, and let $i$ be the minimal number such that
$(\la,\al_i)\ne 0$.
\begin{prop}\label{mainprop}
The vectors $f^\bs (v_{\la-\om_i}\T v_{\om_i})$, $\bs\in S(\la)$,
are linearly independent in $V^a(\la-\om_i)\T V^a(\om_i)$,
and the vectors $f^\bs (v_{\la})$, $\bs\in S(\la)$, form a basis
for $V^a(\la)$.
\end{prop}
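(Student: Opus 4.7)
The plan is to prove Proposition~\ref{mainprop} by induction on $|\la|=\sum_j m_j$, taking the full Theorem~\ref{maintheorem} for weights of strictly smaller size as the inductive hypothesis; the base case (fundamental weights) has already been dealt with in Section~3.1. The two parts (i) and (ii) fit together through a short dimension count that I would set up first. The classical inclusion $V(\la)\hookrightarrow V(\la-\om_i)\T V(\om_i)$ (Cartan component, sending $v_\la\mapsto v_{\la-\om_i}\T v_{\om_i}$) respects the PBW filtration because $\U(\fn^-)$ acts as a derivation on the tensor product, so it induces a surjective $S(\fn^-)$-module map $\widetilde\iota:V^a(\la)\twoheadrightarrow V^a(\la-\om_i,\om_i)$. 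Granting (i), we have at least $\#S(\la)$ linearly independent vectors in $V^a(\la-\om_i,\om_i)$, so combining with the spanning result of Theorem~\ref{spanCn} gives
$$\#S(\la)\;\le\;\dim V^a(\la-\om_i,\om_i)\;\le\;\dim V^a(\la)\;\le\;\#S(\la),$$
forcing equalities throughout. This yields (ii), and as a bonus the isomorphism $V^a(\la)\cong V^a(\la-\om_i,\om_i)$ which will be useful for part~(iii) of Theorem~\ref{maintheorem}.

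The substantive task is therefore (i). I would expand each vector by the Leibniz rule (the coproduct makes $\fn^-$ act as a derivation on the tensor product):
$$f^\bs(v_{\la-\om_i}\T v_{\om_i})\;=\;\sum_{\bt\le\bs}\binom{\bs}{\bt}\,f^{\bs-\bt}v_{\la-\om_i}\T f^\bt v_{\om_i},$$
and then use the inductive basis $\{f^\bm v_{\om_i}:\bm\in S(\om_i)\}$ of $V^a(\om_i)$ to rewrite $f^\bt v_{\om_i}=\sum_\bm a_{\bt,\bm}f^\bm v_{\om_i}$. Regrouping,
$$f^\bs(v_{\la-\om_i}\T v_{\om_i})\;=\;\sum_{\bm\in S(\om_i)}W_{\bs,\bm}\T f^\bm v_{\om_i},\qquad W_{\bs,\bm}=\sum_{\bt\le\bs}\binom{\bs}{\bt}a_{\bt,\bm}\,f^{\bs-\bt}v_{\la-\om_i}\in V^a(\la-\om_i).$$
Since the $f^\bm v_{\om_i}$ are linearly independent in $V^a(\om_i)$, any putative dependence $\sum c_\bs f^\bs(v_{\la-\om_i}\T v_{\om_i})=0$ decouples, for each $\bm\in S(\om_i)$ separately, into a relation $\sum_\bs c_\bs W_{\bs,\bm}=0$ inside $V^a(\la-\om_i)$.

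The central technical claim I would try to establish is a triangularity statement. Order $S(\la)$ by first grouping according to $\bm^\bs_i\in S(\om_i)$ (Definition~\ref{definitionmi}) and, within each group, by applying $\ord$ to $\bs-\bm^\bs_i$, which lies in $S(\la-\om_i)$ by the lemma immediately preceding. For each $\bm\in S(\om_i)$ and each $\bs$ with $\bm^\bs_i=\bm$, I want to show that $W_{\bs,\bm}$, expanded in the inductive basis $\{f^\bt v_{\la-\om_i}:\bt\in S(\la-\om_i)\}$ of $V^a(\la-\om_i)$, has a nonzero coefficient on $f^{\bs-\bm}v_{\la-\om_i}$ (produced by the splitting $\bt=\bm^\bs_i$ in the Leibniz sum, with combinatorial factor $\binom{\bs}{\bm^\bs_i}\ne 0$), while for every $\bs'\ne\bs$ the contribution of $W_{\bs',\bm}$ to that slot is either zero or lies below $f^{\bs-\bm}v_{\la-\om_i}$ in the order $\ord$. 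Combined with the induction hypothesis that $\{f^\bt v_{\la-\om_i}\}$ is a basis, this triangular structure forces all $c_\bs=0$.

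The main obstacle, as I see it, is controlling the straightening coefficients $a_{\bt,\bm}$ arising for $\bt\not\in S(\om_i)$: such $\bt$ appear constantly in the Leibniz expansion (whenever some component of $\bs$ exceeds the $\{0,1\}$-support allowed by Lemma~\ref{minimalset}), and the corresponding relations in $I(\om_i)$ can in principle spread a single monomial over many fundamental basis vectors. The combinatorics I would lean on to cut this down is Remark~\ref{one}: a Dyck path meets the support of any $\bm\in S(\om_i)$ in at most one root, so the straightening moves in $V^a(\om_i)$ are governed essentially by single-root powers, and the minimality built into the definition of $\bm^\bs_i$ (minimal elements of $R^\bs_i$ with respect to $>$) should single out $\bt=\bm^\bs_i$ as the unique splitting of $\bs$ feeding the diagonal entry $f^{\bs-\bm^\bs_i}v_{\la-\om_i}$ in the $\bm^\bs_i$-slot. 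Turning this picture into a rigorous diagonal-dominance statement — likely via a case analysis mirroring Lemma~\ref{pabeal}, separating the cases $\bp$ stays in the $\msl_n$-part from $\bp$ reaches $\al_{j,\ol j}$ — is the delicate step that I expect to occupy the bulk of the argument.
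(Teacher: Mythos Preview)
Your overall framework (induction on $\vert\la\vert$, the dimension count via the Cartan component, and the Leibniz expansion) matches the paper exactly, and your identification of the slot $(\bs-\bm^\bs_i,\bm^\bs_i)$ as the pivotal one is correct. However, you are making the linear independence step much harder than it needs to be, and the ``delicate case analysis'' you anticipate is not what the paper does.

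The key observation you are missing is that $\ord$ is a \emph{monomial order}: if $\br_1\succeq\bfa$ and $\br_2\succeq\bfb$ (with at least one inequality strict), then $\br_1+\br_2\ord\bfa+\bfb$. The paper exploits this as follows. In a putative relation $\sum c_\bs f^\bs(v_{\la-\om_i}\T v_{\om_i})=0$, pick $\bs$ which is $\ord$-maximal among those with $c_\bs\ne 0$. Now look at the coefficient of the single basis vector $f^{\bs-\bm^\bs_i}v_{\la-\om_i}\T f^{\bm^\bs_i}v_{\om_i}$. For any $\bt$ with $c_\bt\ne 0$ and any Leibniz splitting $\bt=\br_1+\br_2$, straightening each factor produces only pairs $(\bfa,\bfb)$ with $\bfa\preceq\br_1$ and $\bfb\preceq\br_2$ (this is exactly Theorem~\ref{spanCn}(ii), applied separately in $V^a(\la-\om_i)$ and $V^a(\om_i)$). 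Hence $\bfa+\bfb\preceq\br_1+\br_2=\bt\preceq\bs$, so the pair $(\bs-\bm^\bs_i,\bm^\bs_i)$ can occur only from $\bt=\bs$ with the trivial splitting, contributing the nonzero binomial $\binom{\bs}{\bm^\bs_i}$. Contradiction.

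In particular, there is no need to decouple by $\bm\in S(\om_i)$, no need to invent a secondary ordering on $S(\la)$ by grouping, and no need to analyze the straightening coefficients $a_{\bt,\bm}$ in $V^a(\om_i)$ beyond the bare fact that straightening decreases with respect to $\ord$. Your proposed grouping-by-$\bm^\bs_i$ ordering would run into trouble with cross-group contributions (an $\bs'$ with $\bm^{\bs'}_i\ne\bm$ can still contribute to $W_{\bs',\bm}$), and resolving that would force you back to the global $\ord$ anyway.
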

\begin{proof}
The proof is by induction on $\vert\la\vert$. If $\la$ is a fundamental weight,
then the first part of the claim makes no sense and the second part is true.

So assume now $\vert\la\vert\ge 2$ and assume that the second part of the proposition holds for all
dominant weights $\mu$ such that $\vert\mu\vert <\vert\la\vert$. We prove now the first
part of the proposition for $\la$.

Assume that there exists some vanishing linear combination
\begin{equation}\label{lcomb}
\sum_{\bs\in S(\la)} c_\bs f^\bs (v_{\la-\om_i}\T v_{\om_i})=0.
\end{equation}
We will prove that $c_\bs=0$ for all $\bs$.

Recall first that we have an order $\ord$ on the set of ${\tt C}_n$-multi-exponents
(see Definition \ref{sp<}) such that if $\bt\notin S(\la)$ then
\[
f^\bt v_\la=\sum_{\substack{\bt\ord\bs\\ \bs\in S(\la)}} d_\bs f^\bs v_\la.
\]
Another important ingredient will be the elements $\bm^\bs_i$ (Definition~\ref{definitionmi}).
Recall that $i$ is minimal such that $(\la,\al_i)\ne 0$,
$\bm^\bs_i\in S(\om_i)$ and $\bs-\bm^\bs_i\in S(\la-\om_i)$.

The proof is by contradiction. Assume that $c_\bs\not=0$ for some $\bs$.
In the following we fix
such an element $\bs\in S(\la)$ and we assume without loss of generality that
$c_\bt=0$ for all $\bt\ord\bs$.

The vector space $V^a(\la-\om_i)\T V^a(\om_i)$ has a basis given by the elements
$f^\bfa v_{\la-\om_i}\T f^\bfb v_{\om_i}$, $\bfa\in S(\la-\om_i)$, $\bfb\in S(\om_i)$.
For all $\bt\in S(\la)$ such that $c_\bt\not=0$ in $(\ref{lcomb})$ we express
$f^\bt (v_{\la-\om_i}\T v_{\om_i})$ as a linear combination of these basis elements,
i.e., we will write
$$
f^\bt (v_{\la-\om_i}\T v_{\om_i})= \sum_{\substack{\bfa\in S(\la-\om_i)\\ \bfb\in S(\om_i)}} K^\bt_{\bfa,\bfb}
f^\bfa v_{\la-\om_i}\T f^\bfb v_{\om_i}.
$$
In the next step we show that $K^\bt_{\bs-\bm^\bs_i,\bm^\bs_i}=0$ for all $\bt\not=\bs$ and $K^\bs_{\bs-\bm^\bs_i,\bm^\bs_i}\not=0$.

Using the rules for the action on a tensor product we see:
\begin{equation}
f^\bs (v_{\la-\om_i}\T v_{\om_i})=C f^{\bs-\bm^\bs_i} v_{\la-\om_i}\T f^{\bm^\bs_i} v_{\om_i}
+\sum_{\br_1+\br_2=\bs} p_{\br_1,\br_2}f^{\br_1} v_{\la-\om_i}\T f^{\br_2} v_{\om_i},
\end{equation}
where $C$ is a nontrivial constant (a product of binomial coefficients) and
$\br_1\ne \bs-\bm^\bs_i$, $\br_2\ne \bm^\bs_i$. The elements
$f^{\br_1} v_{\la-\om_i}$, $f^{\br_2} v_{\om_i}$ need not to be basis
elements, we discuss the several possible cases separately. First assume that
$\br_2\in S(\om_i)\setminus\{\bm^\bs_i\}$. Then
$f^{\br_1} v_{\la-\om_i}\T f^{\br_2} v_{\om_i}$ is a sum of basis elements of
the form $f^{\bfa} v_{\la-\om_i}\T f^{\br_2} v_{\om_i}$ where
$(\bfa,\br_2)\not=(\bs-\bm^\bs_i,\bm^\bs_i)$. For the same reason, if
$\br_1\in S(\la-\om_i)\setminus\{\bs-\bm^\bs_i\}$, then
$f^{\br_1} v_{\la-\om_i}\T f^{\br_2} v_{\om_i}$ is a sum of basis elements of
the form $f^{\br_1} v_{\la-\om_i}\T f^{\bfb} v_{\om_i}$ where
$(\br_1,\bfb)\not=(\bs-\bm^\bs_i,\bm^\bs_i)$.
If $\br_1\notin S(\la-\om_i)$ and $\br_2\notin S(\om_i)$, then
\[
f^{\br_1} v_{\la-\om_i}=\sum_{\substack{\br_1\ord\bfa\\ \bfa\in S(\la-\om_i)}}
e_\bfa f^\bfa v_{\la-\om_i}
\text{\ \ and\ \ }
f^{\br_2} v_{\om_i}=\sum_{\substack{\br_2\ord\bfb\\ \bfb\in S(\om_i)}} d_\bfb f^\bfb v_{\om_i}
\]
with some constants $e_\bfa, d_\bfb$.  But among the pairs $(\bfa,\bfb)$ the pair
$(\bs-\bm^\bs_i,\bm^\bs_i)$ can not appear, because
\[
(\bs-\bm^\bs_i)+\bm^\bs_i=\bs=\br_1+\br_2\ord \bfa+\bfb.
\]
Therefore the expression of $f^\bs (v_{\la-\om_i}\T v_{\om_i})$ as a sum of the basis elements
is of the form
$$
C f^{\bs-\bm^\bs_i} v_{\la-\om_i}\T f^{\bm^\bs_i} v_{\om_i}
+\sum_{\substack{\bfa\in S(\la-\om_i),\bfb\in S(\om_i)\\ (\bfa,\bfb)\not=(\bs-\bm^\bs_i,\bm^\bs_i)}}
p_{\bfa, \bfb}f^{\bfa} v_{\la-\om_i}\T f^{\bfb} v_{\om_i}
$$
and hence $K^\bs_{\bs-\bm^\bs_i,\bm^\bs_i}\not=0$.

Now let us consider a term
$f^\bt (v_{\la-\om_i}\T v_{\om_i})$, $\bt\ne\bs$, such that $c_\bt\not=0$ in $(\ref{lcomb})$.
We write again
\begin{equation}
f^\bt (v_{\la-\om_i}\T v_{\om_i})=\sum_{\br_1+\br_2=\bt} p_{\br_1,\br_2}f^{\br_1} v_{\la-\om_i}\T f^{\br_2} v_{\om_i},
\end{equation}
and express each of the terms $f^{\br_1} v_{\la-\om_i}\T f^{\br_2} v_{\om_i}$ as a sum of the basis elements
$$
f^{\br_1} v_{\la-\om_i}\T f^{\br_2} v_{\om_i}=\sum_{\bfa\in S(\la-\om_i),\bfb\in S(\om_i)}
q_{\bfa,\bfb}f^{\bfa} v_{\la-\om_i}\T f^{\bfb} v_{\om_i}.
$$
Recall that $\bfa$ is less than or equal to $\br_1$, and $\bfb$ is less than or equal to $\br_2$.
We claim that none of the couples $(\bfa,\bfb)$ occurring
with a nonzero coefficient $q_{\bfa,\bfb}$ is equal to $(\bs-\bm^\bs_i,\bm^\bs_i)$.
The proof is by contradiction: If $(\bfa,\bfb)=(\bs-\bm^\bs_i,\bm^\bs_i)$, then
$\br_1+\br_2=\bt$ is greater than or equal to
$\bfa+\bfb=\bs-\bm^\bs_i+\bm^\bs_i=\bs,$
which is not possible, because $c_\bt=0$ if $\bt\ord\bs$.
Hence $K^\bt_{\bs-\bm^\bs_i,\bm^\bs_i}=0$ for all $\bt\not=\bs$.

It follows that if we express each of the summands in $(\ref{lcomb})$ as a linear combination of the basis elements
$f^{\bfa} v_{\la-\om_i}\T f^{\bfb} v_{\om_i}$, $\bfa\in S(\la-\om_i),\bfb\in S(\om_i)$, then the term
$f^{\bs-\bm^\bs_i} v_{\la-\om_i}\T f^{\bm^\bs_i} v_{\om_i}$ occurs only once with a non-zero coefficient,
which is not possible unless $c_\bs=0$ in $(\ref{lcomb})$.
Hence all coefficients
vanish in the expression in $(\ref{lcomb})$, proving the linear independence.

To prove the second part of the proposition recall the degree filtration
$\U(\n^-)_s$ on $\U(\n^-)$:
$$
\U(\n^-)_s=\mathrm{span}\{x_1\dots x_l:\ x_i\in\n^-, l\le s\},
$$
and recall that for a dominant weight $\mu$ we set
$$
V(\mu)_s=\U(\n^-)_sv_\mu.
$$
Then $V^a(\mu)$ is the associated graded $S(\n^-)$-module. The tensor product
$V^a(\la-\om_i)\T V^a(\om_i)$ of the graded modules with grading
$$
V^a(\la-\om_i)\T V^a(\om_i)=\bigoplus_{k\ge 0}\big( \oplus_{k=\ell +m}(V^a(\la-\om_i))_\ell \T (V^a(\om_i))_m \big)
$$
is the associated graded module for the filtration
$$
\big(V(\la-\om_i)\T V(\om_i)\big)_k=\sum_{k=\ell+m} V(\la-\om_i)_\ell \T V(\om_i)_m.
$$
Recall  the total order on the set of positive roots.
We write $f^\bs\in U(\n^-)$ for $\bs\in S(\la)$
for the ordered product of the corresponding root vectors. The linear
independence of the vectors $f^\bs (v_{\la-\om_i}\T v_{\om_i})$, $\bs\in S(\la)$,
in $V^a(\la-\om_i)\T V^a(\om_i)$ implies the linear independence
of the vectors $f^\bs (v_{\la-\om_i}\T v_{\om_i})$, $\bs\in S(\la)$,
in $V(\la-\om_i)\T V(\om_i)$. Since these vectors are all contained
in the Cartan component $V(\la)\hk V(\la-\om_i)\T V(\om_i)$, we obtain
the inequality  $\vert S(\la)\vert \le \dim V(\la)$.
We know already that the vectors $f^\bs (v_{\la})$, $\bs\in S(\la)$, span $V^a(\la)$
(section~\ref{spanningsection}, the straightening law, so
$\vert S(\la)\vert \ge  \dim V^a(\la)= \dim V(\la)$ and hence:
$$
\vert S(\la)\vert= \dim V^a(\la).
$$
It follows that the vectors $f^\bs (v_{\la})$, $\bs\in S(\la)$,  are in fact a basis for $V^a(\la)$.
\end{proof}
Using the straightening law in section~\ref{spanningsection} we get as an immediate consequence:
\begin{cor}\label{relationcorollary}
Let $V^a(\la)=S(\fn^-)/I(\la)$. Then
$I(\la)=S(\fn^-)(\U(\n)\circ R)$, where
$$
R=\mathrm{span}\{ f_{\al_{i,j}}^{m_i+\dots +m_j+1}, 1\le i\le j\le n-1,\
f_{\alpha_{i, \ol{i}}}^{m_i+\dots + m_n+1}, 1\le i\le n\}.
$$
\end{cor}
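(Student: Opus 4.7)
The strategy is to sandwich the ideal $J(\la):=S(\fn^-)(\U(\n^+)\circ R)$ between $I(\la)$ and itself via a dimension count, using the straightening law from Theorem~\ref{spanCn} and the basis result just established in Proposition~\ref{mainprop}.

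First I would verify the inclusion $J(\la)\subseteq I(\la)$. By the $\msl_2$-theory (the lemma opening Section~\ref{spanningsection}) each generator of $R$ annihilates $v_\la$ in $V(\la)$, so it annihilates the highest weight vector in $V^a(\la)$ and hence lies in $I(\la)$. Because the action of $\U(\n^+)$ on $V^a(\la)$ is induced from the action on $V(\la)$ and preserves the PBW filtration (as noted in the initial lemma of Section~\ref{secdef}), the subspace of elements of $S(\fn^-)$ annihilating $v_\la$ is stable under $\U(\n^+)$. Thus $\U(\n^+)\circ R\subseteq I(\la)$, and multiplication by $S(\fn^-)$ gives $J(\la)\subseteq I(\la)$.

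Next I would combine this with the straightening law of Theorem~\ref{spanCn}~{\it ii)}. That law was proved directly inside $S(\fn^-)/J(\la)$: for every $\bs\notin S(\la)$ there is a homogeneous expression $f^\bs-\sum_{\bs\ord\bt} c_\bt f^\bt\in J(\la)$. Iterating this (as in the deduction ``{\it ii)}$\Rightarrow${\it i)}'' in Section~\ref{spanningsection}) shows that the classes of $\{f^\bs\mid \bs\in S(\la)\}$ already span $S(\fn^-)/J(\la)$, so
\[
\dim S(\fn^-)/J(\la)\le \sharp S(\la).
\]

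Finally, the inclusion $J(\la)\subseteq I(\la)$ gives a surjection $S(\fn^-)/J(\la)\twoheadrightarrow S(\fn^-)/I(\la)=V^a(\la)$. By Proposition~\ref{mainprop} the vectors $\{f^\bs v_\la\mid \bs\in S(\la)\}$ form a basis of $V^a(\la)$, so $\dim V^a(\la)=\sharp S(\la)$. Comparing with the inequality above forces $\dim S(\fn^-)/J(\la)=\sharp S(\la)$ and the surjection to be an isomorphism, whence $J(\la)=I(\la)$. Since all heavy lifting (the straightening law and the linear independence) has been carried out, there is no real obstacle here; the only subtlety is making sure that the $\U(\n^+)$-stability argument in the first step is phrased on the graded level, which is immediate from the compatibility of the $\U(\n^+)$-action with the PBW filtration.
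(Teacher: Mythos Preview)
Your argument is correct and is precisely the standard dimension-sandwich that the paper has in mind when it says the corollary is an ``immediate consequence'' of the straightening law: the inclusion $J(\la)\subseteq I(\la)$ gives a surjection $S(\fn^-)/J(\la)\twoheadrightarrow V^a(\la)$, the straightening law bounds the source by $\sharp S(\la)$, and Proposition~\ref{mainprop} forces equality. There is nothing to add; you have simply made explicit what the paper leaves implicit.
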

Using the defining relations for $V^a(\la)$, it is easy to see that we
have a canonical surjective map $V^a(\la)\rightarrow V^a(\lam-\om_i,\om_i)$
sending $v_\la$ to $v_{\la-\om_i}\T v_{\om_i}$. By Proposition~\ref{mainprop}
we know that the image of basis $\{f^\bs (v_{\la}),\bs\in S(\la)\}\subset V^a(\la)$
remains linearly independent and hence:
\begin{cor}
The $S(\fn^-)$ modules $V^a(\la-\om_i,\om_i)$ and $V^a(\la)$ are
isomorphic.
\end{cor}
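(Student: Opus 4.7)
The plan is to construct a canonical surjection
\[
\varphi : V^a(\la) \twoheadrightarrow V^a(\la-\om_i,\om_i)
\]
sending $v_\la$ to $v_{\la-\om_i}\T v_{\om_i}$, and then invoke the linear independence half of Proposition~\ref{mainprop} to promote $\varphi$ to an isomorphism. Note that the target is an $S(\fn^-)$-module by construction, and the source is one with the presentation coming from Corollary~\ref{relationcorollary}, so once $\varphi$ is defined it is automatically surjective because its image contains a cyclic generator of $V^a(\la-\om_i,\om_i)$.

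To see that $\varphi$ is well-defined, by Corollary~\ref{relationcorollary} it suffices to check that $I(\la)=S(\fn^-)(U(\fn^+)\circ R)$ annihilates $v_{\la-\om_i}\T v_{\om_i}$, where $S(\fn^-)$ acts on $V^a(\la-\om_i)\T V^a(\om_i)$ via the primitive coproduct. For a generator $f_\beta^{(\la,\beta)+1}\in R$, the binomial expansion
\[
f_\beta^{(\la,\beta)+1}(v_{\la-\om_i}\T v_{\om_i}) = \sum_{k=0}^{(\la,\beta)+1}\binom{(\la,\beta)+1}{k}\, f_\beta^{k}v_{\la-\om_i}\T f_\beta^{(\la,\beta)+1-k}v_{\om_i}
\]
vanishes term by term, since $(\la-\om_i,\beta)+(\om_i,\beta)=(\la,\beta)$ forces at least one of the two exponents to exceed the corresponding $\msl_2$-annihilation bound. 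To propagate the vanishing from $R$ to all of $U(\fn^+)\circ R$, I would use that $\fn^+$ acts on $V^a(\la-\om_i)\T V^a(\om_i)$ via its coproduct and that $\fn^+\cdot(v_{\la-\om_i}\T v_{\om_i})=0$; together with the derivation identity $e(r\cdot w)=(e\circ r)\cdot w + r\cdot(e\cdot w)$ for $e\in\fn^+$ and $r\in S(\fn^-)$, an easy induction on the degree in $U(\fn^+)$ shows that every element of $U(\fn^+)\circ R$ kills $v_{\la-\om_i}\T v_{\om_i}$.

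Finally, Proposition~\ref{mainprop} asserts both that $\{f^\bs v_\la : \bs\in S(\la)\}$ is a basis of $V^a(\la)$ and that $\{f^\bs(v_{\la-\om_i}\T v_{\om_i}) : \bs\in S(\la)\}$ is linearly independent in $V^a(\la-\om_i)\T V^a(\om_i)$, hence in the submodule $V^a(\la-\om_i,\om_i)$. Since $\varphi$ carries a basis to a linearly independent set, it must be an isomorphism. I do not expect any substantive obstacle here: the analytical content was already extracted in Proposition~\ref{mainprop}, and the remaining check that $I(\la)$ annihilates $v_{\la-\om_i}\T v_{\om_i}$ is a formal coproduct computation.
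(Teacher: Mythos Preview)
Your proposal is correct and follows essentially the same approach as the paper: construct the canonical surjection $V^a(\la)\to V^a(\la-\om_i,\om_i)$ using the presentation from Corollary~\ref{relationcorollary}, then use the linear independence statement in Proposition~\ref{mainprop} to conclude injectivity. The paper condenses the well-definedness check to ``it is easy to see,'' whereas you spell out the binomial expansion and the $U(\fn^+)$-propagation; both are straightforward, and your added detail is accurate (with the understanding that your ``$(\la,\beta)$'' denotes the coroot pairing governing the $\msl_2$-bound, which is additive in $\la$).
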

\begin{proof} ({\it of Theorem~\ref{maintheorem}})
The first and second part of the theorem follow from
Proposition~\ref{mainprop} and Corollary~\ref{relationcorollary}. It remains
to prove the third part. As above, it is easy to see that we
have a canonical surjective map $V^a(\la+\mu)\rightarrow V^a(\lam,\mu)$
sending $v_{\la+\mu}$ to $v_{\la}\T v_{\mu}$.

The corollary above says that our theorem holds if $\mu=\om_i$. Iterating, we obtain that both
$V^a(\la,\mu)$ and $V^a(\la+\mu)$ sit inside the tensor product
\[
V^a(\om_1)^{\T(\la+\mu,\al_1)}\T\dots\T V^a(\om_n)^{\T(\la+\mu,\al_n)}
\]
as highest components (generated from the tensor product of highest weight vectors).
This proves the theorem.
\end{proof}

\section*{Acknowledgements}
The work of Evgeny Feigin was partially supported
by the Russian President Grant MK-281.2009.1, the RFBR Grants 09-01-00058
and NSh-3472.\\2008.2, by Pierre Deligne fund
based on his 2004 Balzan prize in mathematics.
The work of Ghislain Fourier was partially supported by the DFG project
``Kombinatorische Beschreibung von Macdonald und Kostka-Foulkes Polynomen``.
The work of Peter Littelmann was partially supported by the
priority program SPP 1388 of the German Science Foundation.

\end{document}